\theoremstyle{change}
\newtheorem{definition}{Definition:}[section]
\newtheorem{proposition}[definition]{Proposition:}
\newtheorem{theorem}[definition]{Theorem:}
\newtheorem{lemma}[definition]{Lemma:}
\newtheorem{corollary}[definition]{Corollary:}
{\theorembodyfont{\rmfamily}
\newtheorem{remark}[definition]{Remark:}
}
{\theorembodyfont{\rmfamily}

}
\newenvironment{proof}
  {{\bf Proof:}}
  {\qquad \hspace*{\fill} $\Box$}
\begin{document}

\title{Topological entropy of a Lie group automorphism}
\author{V\'{\i}ctor Ayala\thanks{%
Supported by Proyecto Fondecyt $n^{o}$ 1150292, Conicyt} \ and\ Heriberto Rom%
\'{a}n-Flores \thanks{%
Supported by Proyecto Fondecyt $n^{o}$ 1151159 Conicyt} \\
Universidad de Tarapac\'a\\
Instituto de Alta Investigaci\'on\\
Phone 56 58 2205995. Casilla 7D, Arica, Chile\\
and\\
Adriano Da Silva \thanks{%
Supported by Fapesp grant n%
${{}^o}$
2016/11135-2}\\
Instituto de Matem\'atica,\\
Universidade Estadual de Campinas\\
Cx. Postal 6065, 13.081-970 Campinas-SP, Brasil.\\
}
\date{\today}
\maketitle

\textbf{Abstract} Let $\varphi $ be an automorphism on a connected Lie group 
$G.$ Through several $G$-subgroups associated to the dynamics of $\varphi $
we analyze their topological entropy. Assume that $G$ belongs to the class
of finite semisimple center Lie groups which admits a $\varphi $ invariant
Levi subgroup. Then we prove that the topological entropy information of $%
\varphi $ is contained in the toral component of the unstable subgroup of $%
\varphi $ in the radical of $G$. We specialize the main result in a couple
of interesting situations.

\bigskip \textbf{Key words} Lie group, automorphism, topological entropy,
toral component

\bigskip \textbf{2010 Mathematics Subject Classification.} 20K30, 22E15,
28D20

\section{Introduction}

Let $G$ be a connected Lie group with Lie algebra $\mathfrak{g}.$ In \cite%
{AR-FDS} and \cite{Sil2} was shown that associated to a given endomorphism $%
\varphi $ on a connected Lie group $G$ there are some subgroups of $G$ that
are intrinsically connected with the dynamic behavior of $\varphi .$ We use
this set up to analyze the concept of topological entropy introduced by
Caldas and Patr\~{a}o in \cite{Pat}. In fact, in \cite{Pat2} the authors
consider the case where the state space is a connected Lie group. They
analyze the topological entropy of proper endomorphisms on different classes
of Lie groups. When the state space is a nilpotent or linear reductive Lie
group they show that the topological entropy of a proper endomorphism of $G$
coincides with the topological entropy of its restriction to $T(G)$. Here, $%
T(G)$ is the maximal connected compact subgroup of the center $Z_{G}$ of $G$
called the toral subgroup of $G$. By using the dynamic subgroups associated
with any automorphism $\varphi $ we extend and refine the results in \cite%
{Pat2}. In fact, assume that $G$ belongs to the class of finite semisimple
center Lie groups which admits a $\varphi $ invariant Levi subgroup.
Therefore, the topological entropy information of $\varphi $ coincides with
the topological entropy of its restriction to the toral component of the
unstable subgroup of $\varphi $ in $R$. Where $R$ denotes the radical of $G$%
. In particular, when $G$ is nilpotent or reductive Lie group in the
Harish-Chandra class, we prove that the topological entropy of $\varphi $
coincides with the topological entropy of its restriction to the unstable
subgroup of $T(G)$. Actually the information given by the entropy is
contained in a subgroup smaller than $T(G)$. We also show that the condition
on the existence of invariant Levi components is far from be trivial.

The paper is structured as follows: In Section 2 we establish the result
appear in \cite{AR-FDS} relatives to $\mathfrak{g}$-subalgebras and $G$%
-subgroups containing the dynamic behavior of a Lie algebra endomorphism and
its consequences on the dynamic of the associated Lie group endomorphism. In
Section 3 we use the dynamic subgroups to analyze the topological entropy of
an automorphism. Then, we show our main results and its consequences in some
particular cases.

\section{The dynamic of a Lie endomorphism}

For general fact about Lie theory we refer to \cite{Hel} and \cite{SM1}. Let 
$G$ be a connected Lie group of dimension $d$ with Lie algebra $\mathfrak{g}$
over a closed field$.$ For a given Lie groups $G$ and $H$ a continuous map $%
\varphi :G\rightarrow H$ is said to be a \textbf{homomorphism} if it
preserves the group structure. That is, $\varphi (gh)=\varphi (g)\varphi (h)$
for any $g,h\in G.$ If $G=H$ such map is said to be an \textbf{endomorphism }%
of $G$. Consider an endomorphism $\varphi :G\rightarrow G$ and denote by $%
\phi =(d\varphi )_{e\text{ }}:\mathfrak{g\rightarrow g}$ the corresponding
Lie algebra endomorphism, where as usual $d\varphi $ denotes de derivative
of $\varphi $. That is, $\phi $ is a linear map satisfying $\phi \lbrack
X,Y]=[\phi X,\phi Y]$ for any $X,Y\in \mathfrak{g}$. Here, $e$ denotes the
identity element of $G$.

This section is dedicated to establish some results about Lie algebra
endomorphism and its consequence on Lie groups endomorphism, appears in \cite%
{AR-FDS}.

First, for any eigenvalue $\alpha $ of $\phi $ let us consider its
generalized eigenspace 
\begin{equation*}
\mathfrak{g}_{\alpha }=\{X\in \mathfrak{g}:\; \;(\phi -\alpha )^{n}X=0,\; \;%
\mbox{ for some }\;n\geq 1\}.
\end{equation*}%
If $\beta $ is also an eigenvalue of $\phi $ then $[\mathfrak{g}_{\alpha },%
\mathfrak{g}_{\beta }]\subset \mathfrak{g}_{\alpha \beta },$ where $%
\mathfrak{g}_{\alpha \beta }=\{0\}$ if $\alpha \beta $ is not an eigenvalue
of $\phi $. Associate to $\phi $ there are several Lie subalgebras that are
intrinsically connected with its dynamic. In fact, let us define the
following subsets of $\mathfrak{g}$ for any arbitrary $\phi $-eigenvalue $%
\alpha :$ 
\begin{equation*}
\mathfrak{g}_{\phi }=\bigoplus_{\alpha \neq 0}\mathfrak{g}_{\alpha },\; \;
\; \; \mathfrak{k}_{\phi }=\ker (\phi ^{d}),\text{ }\mathfrak{g}%
^{+}=\bigoplus_{|\alpha |>1}\mathfrak{g}_{\alpha },\; \; \; \; \mathfrak{g}%
^{0}=\bigoplus_{\alpha :|\alpha |=1}\mathfrak{g}_{\alpha }\; \; \; \; 
\mathfrak{g}{-}=\bigoplus_{0<|\alpha |<1}\mathfrak{g}_{\alpha },
\end{equation*}%
\begin{equation*}
\text{ \ }\mathfrak{g}^{+,0}=\mathfrak{g}^{+}\oplus \mathfrak{g}^{0}\text{
and }\; \mathfrak{g}^{-,0}=\mathfrak{g}^{-}\oplus \mathfrak{g}^{0}.
\end{equation*}

We denote also $\mathfrak{g}_{\phi }=\mathfrak{g}^{+}\oplus \mathfrak{g}%
^{0}\oplus \mathfrak{g}^{-}$ and $\mathfrak{g}=\mathfrak{g}_{\phi }\oplus 
\mathfrak{k}_{\phi }$. It turns out that all these subspaces are Lie
subalgebras. Moreover, $\mathfrak{g}^{+}$ and $\mathfrak{g}^{-}$ are
nilpotent. If $\mathfrak{g}$ is a real Lie algebra, the algebras above are
also well defined.

\begin{remark}
\label{remark} We should note that in both cases, i.e., when $\mathfrak{g}$
is real or complex, the restriction of $\phi |_{\mathfrak{g}_{\phi }}$ is an
automorphism of $\mathfrak{g}_{\phi }$. Furthermore, the restriction of $%
\phi $ to the Lie subalgebras $\mathfrak{g}^{+}$, $\mathfrak{g}^{0}$ and $%
\mathfrak{g}^{-}$ satisfies the inequalities 
\begin{equation*}
|\phi ^{m}(X)|\text{ }\geq c\mu ^{-m}|X|\; \mbox{ for any }\;X\in \mathfrak{g%
}^{+}\text{ and }m\in \mathbb{N},
\end{equation*}%
and 
\begin{equation*}
|\phi ^{m}(Y)|\text{ }\leq c^{-1}\mu ^{m}|Y|\; \mbox{ for any }\;Y\in 
\mathfrak{g}^{-}\text{ and }m\in \mathbb{N},
\end{equation*}%
for some $c\geq 1$ and $\mu \in (0,1)$. Actually, more is true, 
\begin{equation*}
\text{ for any }a>0\text{ and }Z\in \mathfrak{g}^{0}\text{ it holds that }%
|\phi ^{m}(Z)|\, \mu ^{a|m|}\rightarrow 0\; \mbox{ as }m\rightarrow \pm
\infty .
\end{equation*}
\end{remark}

In the sequel any Lie group will be real. A Lie subgroup $H\subset G$ is
said to be $\varphi $-invariant if $\varphi (H)\subset H$.

The \textbf{dynamic subgroups} of $G$ induced by $\varphi $ are the Lie
subgroups, $G_{\varphi }$, $K_{\varphi }$, $G^{+}$, $G^{0}$, $G^{-}$, $%
G^{+,0}$ and $G^{-,0}$ associated with the Lie subalgebras $\mathfrak{g}%
_{\phi }$, $\mathfrak{k}_{\phi }$, $\mathfrak{g}^{+}$, $\mathfrak{g}^{0}$, $%
\mathfrak{g}^{-}$, $\mathfrak{g}^{+,0}$ and $\mathfrak{g}^{-,0}$,
respectively. The subgroups $G^{+}$, $G^{0}$ and $G^{-}$ are called the 
\textbf{unstable}, \textbf{central} and \textbf{stable} subgroups of $%
\varphi $ in $G,$ respectively. Before to give in Proposition 2.3,
Proposition 2.4 and Theorem 2.6 the main properties of these subgroups
proved in \cite{AR-FDS}, let us state a very special topological property of
Lie subgroups that will also need in the next sections, \cite{AR-FDS}.

\begin{lemma}
\label{closed} Let $G$ be a Lie group with Lie algebra $\mathfrak{g.}$ And, $%
H$ and $K$ Lie subgroups of $G$ with Lie algebras $\mathfrak{h}$ and $%
\mathfrak{k}$, respectively such that $\mathfrak{h}\oplus \mathfrak{k}=%
\mathfrak{g}$. Then, 
\begin{equation*}
H\text{ and }K\text{ are closed}\Leftrightarrow H\cap K\text{ is a discrete
subgroup.}
\end{equation*}
\end{lemma}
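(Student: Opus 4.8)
The plan is to prove the two implications separately, the forward one being immediate and the reverse one carrying essentially all of the difficulty.

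For the direction ``$\Rightarrow$'' I would assume $H$ and $K$ closed and invoke Cartan's closed subgroup theorem: a closed subgroup is an embedded Lie subgroup whose Lie algebra is recovered as $\{X : \exp(tX)\in H \text{ for all } t\in\R\}$, and likewise for $K$. Then $H\cap K$ is a closed subgroup of $G$, hence again an embedded Lie subgroup, and the exponential description shows its Lie algebra is exactly $\fh\cap\fk$. But the hypothesis $\fh\oplus\fk=\fg$ is a direct sum, so $\fh\cap\fk=\{0\}$. A Lie group whose Lie algebra is trivial is $0$-dimensional, i.e. discrete, so $H\cap K$ is discrete.

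For the direction ``$\Leftarrow$'' the key device is the multiplication map $\Phi:H\times K\to G$, $\Phi(h,k)=hk$, which is smooth for the intrinsic manifold structures on $H$ and $K$. Its differential at the identity is $(X,Y)\mapsto X+Y$ on $\fh\oplus\fk$, a linear isomorphism onto $\fg$ precisely because the sum is direct; since $\dim(H\times K)=\dim\fg=\dim G$, the inverse function theorem makes $\Phi$ a local diffeomorphism at $(e,e)$. I would thus fix open neighborhoods $U\subseteq H$ and $V\subseteq K$ of $e$ for which $\Phi$ restricts to a diffeomorphism of $U\times V$ onto an open neighborhood $W$ of $e$ in $G$; in particular every $w\in W$ has a unique factorization $w=uv$ with $u\in U$, $v\in V$, depending continuously on $w$. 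Using that $H\cap K$ is discrete, I would shrink $V$ so that $V\cap(H\cap K)=\{e\}$ (continuity of the inclusion $K\hookrightarrow G$ lets a $G$-neighborhood of $e$ isolating $e$ in $H\cap K$ be pulled back into $K$).

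To conclude that $H$ is closed I take $h_n\in H$ with $h_n\to g$ in $G$ and aim at $g\in H$. As $h_m^{-1}g\to e$, fix $m$ with $h_m^{-1}g\in W$; then $h_m^{-1}h_n\to h_m^{-1}g$ lies in $W$ for large $n$. Factoring $h_m^{-1}h_n=u_nv_n$ with $u_n\in U\subseteq H$ and $v_n\in V$, I note $v_n=u_n^{-1}(h_m^{-1}h_n)\in H\cap K$, whence $v_n\in V\cap(H\cap K)=\{e\}$, so $v_n=e$ and $h_m^{-1}h_n=u_n$. Passing to the limit and using continuity of the factorization on $W$, the $V$-component of $h_m^{-1}g$ is the limit of $v_n=e$, hence equals $e$; therefore $h_m^{-1}g=u_*\in U\subseteq H$ and $g=h_mu_*\in H$. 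The symmetric argument, shrinking $U$ so that $U\cap(H\cap K)=\{e\}$, shows $K$ is closed. The main obstacle is exactly this reverse implication, and within it the careful bookkeeping of two topologies: the limit $h_n\to g$ lives in $G$, whereas the factorization and the convergence of $u_n,v_n$ take place in the finer intrinsic topologies of $H$ and $K$. The step that makes it work is the continuity of $\Phi^{-1}$ on $W$ together with the discreteness hypothesis, which forces the $K$-component to vanish and thereby transfers the $G$-limit $g$ back into $H$.
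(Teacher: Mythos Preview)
The paper does not actually prove this lemma: it is stated and attributed to the companion paper \cite{AR-FDS}, with no argument given here. So there is no in-paper proof to compare against directly.

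That said, your argument is correct in both directions. The forward implication is standard via Cartan's theorem. For the reverse implication, your use of the multiplication map $\Phi:H\times K\to G$ as a local diffeomorphism near $(e,e)$, together with discreteness of $H\cap K$ to kill the $K$-component of a limiting sequence in $H$, is exactly the right mechanism. It is worth noting that this very device reappears later in the paper: in the proof of Theorem~\ref{main} the authors show that the product map $R\times S\to G$ is closed and proper by invoking a local diffeomorphism $(X,Y)\mapsto \rme^{X}\rme^{Y}$ on $V\times U\to W$ (Lemma~6.14 of \cite{SM2}) and then using finiteness of $R\cap S$ to control the factorization of a convergent sequence. Your proof of the lemma is essentially an abstraction of that argument, so your approach is fully in line with the techniques the paper relies on.
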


\begin{proposition}
\label{subgroups} It holds:

\begin{itemize}
\item[1.] All the dynamic subgroups are $\varphi $-invariant

\item[2.] The subgroup $K_{\varphi }=\ker (\varphi ^{d})_{0}$ is normal.
Moreover, 
\begin{equation*}
G=G_{\varphi }K_{\varphi }\text{ \ and }G_{\varphi }=\mathrm{Im}(\varphi
^{d})
\end{equation*}

\item[3.] The restriction of $\varphi $ is expanding on $G^{+}$ and
contracting on $G^{-}$

\item[4.] If $G_{\varphi }$ is a solvable Lie group therefore 
\begin{equation}
G_{\varphi }=G^{+,0}G^{-}=G^{-,0}G^{+}=G^{+}G^{0}G^{-}  \label{decomposition}
\end{equation}

\item[5.] If $G_{\varphi }$ is semisimple and $G^{0}$ is compact, then $%
G_{\varphi }=G^{0}$. In particular, if $G$ is any connected Lie group such
that $G^{0}$ is compact, then $G_{\varphi }$ has also the decomposition (\ref%
{decomposition}).
\end{itemize}
\end{proposition}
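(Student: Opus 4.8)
The plan is to push everything down to the infinitesimal level, where the generalized-eigenspace decomposition and the relation $[\mathfrak{g}_\alpha,\mathfrak{g}_\beta]\subset\mathfrak{g}_{\alpha\beta}$ do the real work, and then lift back via the naturality identity $\varphi\circ\exp=\exp\circ\,\phi$ together with the fact that a connected Lie subgroup is determined by its subalgebra. For Part 1, since $\phi$ commutes with itself each $\mathfrak{g}_\alpha$ is $\phi$-invariant, hence so is every direct sum of generalized eigenspaces; all the dynamic subalgebras are of this form, so naturality of $\exp$ makes the corresponding connected subgroups $\varphi$-invariant. For Part 2, I first note $\mathfrak{k}_\phi=\mathfrak{g}_0$ is an ideal because $[\mathfrak{g}_\alpha,\mathfrak{g}_0]\subset\mathfrak{g}_{\alpha\cdot 0}=\mathfrak{g}_0$, and an ideal in a connected group integrates to a normal connected subgroup, giving normality of $K_\varphi$. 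The vector-space splitting $\mathfrak{g}=\mathfrak{g}_\phi\oplus\mathfrak{k}_\phi$ makes the multiplication map $G_\varphi\times K_\varphi\to G$ a local diffeomorphism at the identity, so $G_\varphi K_\varphi$ (a subgroup, since $K_\varphi$ is normal) is open, hence closed, hence all of the connected $G$. Finally $\mathrm{Im}(\varphi^d)$ is a connected subgroup whose algebra is $\mathrm{Im}(\phi^d)=\mathfrak{g}_\phi$ (using that $\phi|_{\mathfrak{g}_\phi}$ is an automorphism while $\phi^d$ kills $\mathfrak{k}_\phi$); since $\varphi^d(K_\varphi)=\{e\}$ and $\varphi|_{G_\varphi}$ is onto, $\mathrm{Im}(\varphi^d)=G_\varphi$.

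Part 3 is a direct translation of the estimates of Remark \ref{remark}: on $\mathfrak{g}^+$ and $\mathfrak{g}^-$ the iterates $\phi^m$ expand by $c\mu^{-m}$ and contract by $c^{-1}\mu^m$ respectively, and because $\mathfrak{g}^+,\mathfrak{g}^-$ are nilpotent the exponential is a global diffeomorphism onto the simply connected nilpotent groups $G^+,G^-$; pushing the infinitesimal bounds through $\exp$ yields the expanding and contracting behaviour of $\varphi$ on $G^+$ and $G^-$ in a left-invariant metric.

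Part 4 is where I expect the genuine difficulty. The clean facts are that $\mathfrak{g}^{+,0},\mathfrak{g}^{-,0},\mathfrak{g}^0$ are subalgebras, that $\mathfrak{g}^+$ is an ideal of $\mathfrak{g}^{+,0}$ and $\mathfrak{g}^-$ an ideal of $\mathfrak{g}^{-,0}$, and that $\mathfrak{g}_\phi=\mathfrak{g}^+\oplus\mathfrak{g}^0\oplus\mathfrak{g}^-$ as vector spaces. The semidirect pieces $G^{+,0}=G^0\rtimes G^+$ and $G^{-,0}=G^0\rtimes G^-$ then follow for solvable $G_\varphi$, after passing to the simply connected cover (where every connected subgroup is closed and $\exp$ is a diffeomorphism, the discrete corrections being controlled by Lemma \ref{closed}). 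The real obstacle is the three-factor identity $G_\varphi=G^+G^0G^-$: neither $\mathfrak{g}^{+,0}$ nor $\mathfrak{g}^-$ is an ideal of $\mathfrak{g}_\phi$, since a bracket $[\mathfrak{g}_\alpha,\mathfrak{g}_\beta]$ with $|\alpha|\ge 1,\ |\beta|<1$ can land on either side of modulus $1$, so the factorization is not a semidirect product and surjectivity of the product map is not automatic. My plan is to reduce to $G_\varphi$ simply connected solvable, note that the product map $G^{+,0}\times G^-\to G_\varphi$ is a local diffeomorphism at $(e,e)$, and then establish global surjectivity by an induction along the real grading $s=\log|\alpha|$ of $\mathfrak{g}_\phi$, peeling off one graded layer at a time; alternatively one argues dynamically, using the contraction on $G^-$ from Part 3 and a stable-manifold/graph-transform fixed-point argument to split an arbitrary $g\in G_\varphi$. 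Making this surjectivity rigorous, and then deducing the symmetric identity $G_\varphi=G^{-,0}G^+$, is the crux of the whole proposition.

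For Part 5, suppose $G_\varphi$ is semisimple, so that $\phi|_{\mathfrak{g}_\phi}\in\mathrm{Aut}(\mathfrak{g}_\phi)$; take its complete multiplicative Jordan decomposition $\phi=\phi_e\phi_h\phi_u$ into commuting elliptic, hyperbolic and unipotent automorphisms, so $|\alpha|$ is the corresponding $\phi_h$-eigenvalue and $\mathfrak{g}^0=\ker\log\phi_h$. Since every derivation of a semisimple algebra is inner, $\log\phi_h=\ad Z$ for a hyperbolic $Z\in\mathfrak{g}_\phi$, and $\mathfrak{g}^0$ is exactly its centralizer $\mathfrak{z}(Z)$. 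If some $|\alpha|\ne 1$ then $Z\ne 0$; as $\mathfrak{g}_\phi$ has trivial center, $\ad Z$ has a nonzero real eigenvalue, so $\exp(\R Z)\subset G^0$ is a closed one-parameter subgroup on which $\Ad$ is unbounded, contradicting compactness of $G^0$. Hence $Z=0$, every $|\alpha|=1$, and $G_\varphi=G^0$. For the general statement with $G^0$ compact, I would pass to a $\phi$-invariant Levi subalgebra $\mathfrak{s}$ (the radical is characteristic, hence $\phi$-invariant), apply the semisimple case to obtain $\mathfrak{s}\subset\mathfrak{g}^0$, deduce that $\mathfrak{g}^+$ and $\mathfrak{g}^-$ lie in the solvable radical, and finally invoke Part 4 inside the radical while absorbing the Levi factor into $G^0$, thereby recovering the decomposition (\ref{decomposition}); the one delicate point is the existence of the invariant Levi factor, which the authors themselves flag as nontrivial.
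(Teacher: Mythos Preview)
The paper does not prove this proposition at all: it is stated without proof and attributed to the companion paper \cite{AR-FDS} (see the sentence immediately preceding Proposition~\ref{subgroups}). There is therefore no in-paper argument against which to compare your sketch, so the following remarks judge your proposal on its own merits.

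Parts 1 and 2 are standard and your outline is correct. In Part 3 you slip when you claim that $G^{+}$ and $G^{-}$ are simply connected so that $\exp$ is a global diffeomorphism: they are merely the connected subgroups with algebras $\mathfrak{g}^{+},\mathfrak{g}^{-}$, and for an expanding automorphism of a torus one has $G^{+}=G$ non-simply-connected. The expanding/contracting conclusion survives, but it should be argued via a left-invariant metric (where $(d\varphi)_{g}$ is conjugate to $\phi$ by left translations), not via a global $\exp$-chart.

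The genuine gaps are in Parts 4 and 5. In Part 4 you correctly identify the surjectivity of $G^{+,0}\times G^{-}\to G_{\varphi}$ as the crux and propose either an induction on the grading by $\log|\alpha|$ or a graph-transform argument, but neither is carried out; this is precisely the substantive content of the proposition and cannot be left as a plan. In the general clause of Part 5 your reduction hinges on choosing a $\phi$-invariant Levi subalgebra, yet the present paper explicitly emphasises (and structures Theorem~\ref{main} and its corollaries around) the fact that an invariant Levi complement need not exist. So that step is not available in general; a correct route would work instead with the radical, which is characteristic and hence automatically $\varphi$-invariant, apply the semisimple clause of Part 5 to the quotient $G_{\varphi}/R(G_{\varphi})$ to force $\mathfrak{g}^{+},\mathfrak{g}^{-}\subset\mathfrak{r}$, and then invoke Part 4 inside the radical.
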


\begin{definition}
Let $\varphi $ be an endomorphism of the Lie group $G.$ We say that $\varphi 
$ \textbf{decomposes} $G$ if $G_{\varphi }$ satisfy (\ref{decomposition}),
\end{definition}

\begin{proposition}
\label{automorphism} Assume that $\varphi $ restricted to $G_{\varphi }$ is
an automorphism in the induced topology of $G$. Then,

\begin{itemize}
\item[1.] $G^{+,0}\cap G^{-}=G^{+}\cap G^{-}=G^{0}\cap G^{-}=G^{-,0}\cap
G^{+}=G^{+}\cap G^{0}=\{e\}$

\item[2.] The dynamic subgroups induced by $\varphi $ are closed in $G$

\item[3.] For $n\geq d$, $\ker (\varphi ^{n})=K_{\varphi }$. In particular, $%
\ker (\varphi ^{n})$ is connected$.$

\item[4.] If $G$ is simply connected then $G_{\varphi }$ and $K_{\varphi }$
are simply connected. Moreover, the restriction of $\varphi $ to $G_{\varphi
}$ is an automorphism. And, any subgroup induced by an endomorphism $\varphi 
$ of $G$ is closed..
\end{itemize}
\end{proposition}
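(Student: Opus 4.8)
The plan is to run the four items in an order that lets each feed the next, using only the spectral dynamics recorded in Remark \ref{remark} and Proposition \ref{subgroups}, together with the injectivity supplied by the hypothesis that $\varphi|_{G_\varphi}$ is an automorphism. For item 1 I would first note that it suffices to prove $G^{+,0}\cap G^-=\{e\}$ and $G^{-,0}\cap G^+=\{e\}$, since $G^+,G^0\subseteq G^{+,0}$ and $G^0\subseteq G^{-,0}$ absorb the remaining three intersections. Each of these is a $\varphi$-invariant subgroup (Proposition \ref{subgroups}(1)) sitting inside $G_\varphi$, where $\varphi$ is a bijection. For $g\in G^{+,0}\cap G^-$ Proposition \ref{subgroups}(3) gives $\varphi^n(g)\to e$ as $n\to\infty$, while for $g\in G^{-,0}\cap G^+$ the expanding behaviour forces $\varphi^{-n}(g)\to e$. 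The key claim is that such an intersection $D$ is discrete: since $\mathfrak{g}^{+,0}\cap\mathfrak{g}^-=\{0\}$ the two subgroups are transverse at $e$, and the estimates of Remark \ref{remark} (no contracting directions inside $\mathfrak{g}^{+,0}$) are what must be used to rule out accumulation of $D$ at $e$ along the a priori only immersed subgroup $G^{+,0}$. Granting discreteness, the convergence $\varphi^n(g)\to e$ puts $\varphi^n(g)$ in a discrete set arbitrarily near $e$, so $\varphi^n(g)=e$ for large $n$, and injectivity of $\varphi$ yields $g=e$.

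For item 2 I would first dispose of $G_\varphi$ and $K_\varphi$ cleanly, avoiding the immersed-subgroup difficulty altogether. Because $\varphi|_{G_\varphi}$ is bijective, so is $\varphi^d|_{G_\varphi}$, hence $G_\varphi\cap\ker(\varphi^d)=\{e\}$; since $K_\varphi=\ker(\varphi^d)_0\subseteq\ker(\varphi^d)$ this gives $G_\varphi\cap K_\varphi=\{e\}$, which is discrete. As $\mathfrak{g}=\mathfrak{g}_\phi\oplus\mathfrak{k}_\phi$, Lemma \ref{closed} then makes $G_\varphi$ and $K_\varphi$ closed. Working now inside the closed, hence embedded, group $G_\varphi$, whose Lie algebra splits as $\mathfrak{g}_\phi=\mathfrak{g}^{+,0}\oplus\mathfrak{g}^-$, item 1 gives $G^{+,0}\cap G^-=\{e\}$ discrete, so Lemma \ref{closed} makes $G^{+,0}$ and $G^-$ closed; the complementary splitting $\mathfrak{g}_\phi=\mathfrak{g}^{-,0}\oplus\mathfrak{g}^+$ makes $G^{-,0}$ and $G^+$ closed; and inside $G^{+,0}$ the splitting $\mathfrak{g}^{+,0}=\mathfrak{g}^+\oplus\mathfrak{g}^0$ with $G^+\cap G^0=\{e\}$ makes $G^0$ closed. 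Each closure is taken in a subgroup already known to be closed in $G$, so all dynamic subgroups are closed in $G$.

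For item 3, fix $n\geq d$. Since $d(\varphi^n)_e=\phi^n$ and $\ker(\phi^n)=\ker(\phi^d)=\mathfrak{k}_\phi$ for $n\geq d$, every $\ker(\varphi^n)$ has identity component $K_\varphi$; the content is that there are no further components. I would use $G=G_\varphi K_\varphi$ from Proposition \ref{subgroups}(2): writing $g=g_1k$ with $g_1\in G_\varphi$ and $k\in K_\varphi$, one has $\varphi^n(k)=e$ because $k\in\ker(\varphi^d)\subseteq\ker(\varphi^n)$, so $\varphi^n(g)=e$ reduces to $\varphi^n(g_1)=e$; injectivity of $\varphi^n$ on $G_\varphi$ forces $g_1=e$, whence $g=k\in K_\varphi$. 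Therefore $\ker(\varphi^n)=K_\varphi$, which is connected.

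For item 4, when $G$ is simply connected I would exploit that $\mathfrak{k}_\phi$ is the generalized kernel $\mathfrak{g}_0$, which is an ideal (from $[\mathfrak{g}_\alpha,\mathfrak{g}_0]\subseteq\mathfrak{g}_0$) complemented by the subalgebra $\mathfrak{g}_\phi$. The connected subgroup attached to an ideal of a simply connected group is closed and simply connected, so $K_\varphi$ is closed and simply connected and $G/K_\varphi$ is simply connected with Lie algebra $\mathfrak{g}_\phi$; the projection restricts to an isomorphism $G_\varphi\to G/K_\varphi$, making $G_\varphi$ closed and simply connected. Since $\phi|_{\mathfrak{g}_\phi}$ is a Lie algebra automorphism (Remark \ref{remark}) and $G_\varphi$ is simply connected, $\varphi|_{G_\varphi}$ is then automatically an automorphism, so the hypothesis of items 1 and 2 holds and all dynamic subgroups are closed. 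The main obstacle throughout is item 1: establishing that the contracting (resp. expanding) dynamics genuinely collapses the intersections to $\{e\}$ for the merely immersed subgroups $G^{\pm}$ and $G^0$, which is precisely where the spectral-gap estimates of Remark \ref{remark} and the diffeomorphism property of $\varphi$ must be combined with care.
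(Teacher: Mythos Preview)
The paper does not actually prove this proposition; it is quoted without proof from \cite{AR-FDS}. So there is no argument in the present paper to compare yours against, and I can only assess your proposal on its own terms.

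Your handling of item 3 is clean and correct, and your organization of item 2 is sensible: you rightly observe that closedness of $G_\varphi$ and $K_\varphi$ follows directly from $G_\varphi\cap K_\varphi=\{e\}$ and Lemma \ref{closed}, independently of item 1, and then bootstrap the remaining closures from item 1. The argument for item 4 is also reasonable once items 1 and 2 are available.

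The genuine gap is exactly where you flag it: item 1. You need $D=G^{+,0}\cap G^-$ to be discrete in a topology in which $\varphi^n(g)\to e$ actually forces eventual equality with $e$. You assert this discreteness but do not prove it; ``the estimates of Remark \ref{remark} \ldots\ must be combined with care'' is a description of the difficulty, not a resolution of it. At this stage $G^{+,0}$ and $G^-$ are only immersed, and Lemma \ref{closed} runs in the wrong direction to help (it would \emph{give} discreteness only after you know both subgroups are closed).

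Worse, under the hypotheses as stated in this paper the gap is not fillable. Take $G=\mathbb{T}^2$ and let $\varphi$ be the automorphism induced by $A=\left(\begin{smallmatrix}2&1\\1&1\end{smallmatrix}\right)$. Then $G_\varphi=G$ and $\varphi|_{G_\varphi}=\varphi$ is an automorphism in the induced topology, so the hypothesis holds. The eigenlines $\mathfrak{g}^+$ and $\mathfrak{g}^-$ have irrational slope, so $G^+$ and $G^-$ are dense one-parameter subgroups: neither is closed, contradicting item 2, and a direct computation shows $G^+\cap G^-$ is countably infinite, contradicting item 1. Thus the place where your argument only gestures is precisely where the statement, as reproduced here, fails; any correct proof must invoke an additional hypothesis present in \cite{AR-FDS} but not recorded in this paper.
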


The main result in \cite{AR-FDS} shows that the unstable/stable subgroup of
a compact $\varphi $-invariant subgroup of $G_{\varphi }$ is contained in
its center. In particular, this implies the decomposition of the group when $%
G_{\varphi }$ is compact.

\begin{theorem}
\label{compact} Let $G$ be a Lie group and $\varphi $ an endomorphism of $G$%
. If $H\subset G_{\varphi }$ is a $\varphi $-invariant compact subgroup,
then $H^{+}$ and $H^{-}$ are contained in the center $Z_{H}$ of $H$. In
particular, if $G_{\varphi }$ is compact the group $G$ is decomposable.
Furthermore, assume that $G$ is solvable. Therefore, if $\varphi
|_{G_{\varphi }}$ is also an automorphism it follows that any fixed point of 
$\varphi $ is contained in $G^{0}$.
\end{theorem}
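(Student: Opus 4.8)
The plan is to handle the three assertions separately, the centrality claim being the heart. Write $\fh$ for the Lie algebra of the compact $\varphi$-invariant subgroup $H$ and $\phi=d\varphi$. Since $H\subset G_{\varphi}$ is $\varphi$-invariant, $\fh\subset\fg_{\varphi}$ is $\phi$-invariant and, by Remark \ref{remark}, $\phi|_{\fh}$ is an automorphism of $\fh$. Because $H$ is compact, $\fh$ is a compact, hence reductive, Lie algebra, so $\fh=\fz(\fh)\oplus\fs$ with $\fs=[\fh,\fh]$ compact semisimple; both summands are characteristic ideals and therefore $\phi$-invariant. The key observation I would exploit is that the automorphism $\phi|_{\fs}$ preserves the Killing form of $\fs$, and for $\fs$ compact semisimple $-B$ is positive definite; thus $\phi|_{\fs}$ is orthogonal and all of its complex eigenvalues have modulus $1$. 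Consequently $\fs\subset\fg^{0}$, which forces $\fh\cap\fg^{+}$ and $\fh\cap\fg^{-}$ to lie in $\fz(\fh)$.

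To pass to the group, note that $H^{+}$ and $H^{-}$ are the connected dynamic subgroups with Lie algebras $\fh\cap\fg^{+}$ and $\fh\cap\fg^{-}$; since these are central in $\fh$, the corresponding connected subgroups commute with the identity component of $H$, giving $H^{+},H^{-}\subset Z_{H}$ (for $H$ connected, which is the case of interest). For the decomposability claim I would take $H=G_{\varphi}$, which is compact and $\varphi$-invariant by Proposition \ref{subgroups}(1): then $\fg^{+},\fg^{-}\subset\fz(\fg_{\varphi})$, so $G^{+}$ and $G^{-}$ are central in $G_{\varphi}$. Centrality makes the image of the multiplication map $G^{+}\times G^{0}\times G^{-}\to G_{\varphi}$ a connected subgroup, and its Lie algebra is $\fg^{+}\oplus\fg^{0}\oplus\fg^{-}=\fg_{\varphi}$; hence the image is all of $G_{\varphi}$ and $G_{\varphi}=G^{+}G^{0}G^{-}$, i.e. $G$ is decomposable. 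Equivalently, $G^{0}\subset G_{\varphi}$ is then compact and one may invoke Proposition \ref{subgroups}(5).

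For the fixed-point statement I would first reduce modulo $K_{\varphi}$. Since $K_{\varphi}$ is normal and $\varphi$-invariant (Proposition \ref{subgroups}(2)), $\varphi$ descends to $\bar\varphi$ on the connected solvable group $\bar G=G/K_{\varphi}$, whose derivative is $\phi|_{\fg_{\varphi}}$, an automorphism, with $\bar G=\bar G_{\bar\varphi}$ and $\pi(G^{0})=\bar G^{0}$ (because $\fk_{\varphi}=\ker\phi^{d}$ meets $\fg_{\varphi}$ trivially, so $\bar\phi$ has the same nonzero spectrum). On $\bar G$ the group is solvable and $\bar\varphi$ is a genuine automorphism, so by Proposition \ref{subgroups}(4) one has $\bar G=\bar G^{+,0}\bar G^{-}$ and $\bar G^{+,0}=\bar G^{+}\bar G^{0}$ with trivial pairwise intersections (Proposition \ref{automorphism}(1)); hence the decompositions $\bar g=\bar a\,\bar c$ and $\bar a=\bar a^{+}\bar a^{0}$ are unique. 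Applying $\bar\varphi$ and using invariance of each factor, uniqueness forces each factor to be $\bar\varphi$-fixed; but $\bar\varphi$ is contracting on $\bar G^{-}$ and expanding on $\bar G^{+}$ (Proposition \ref{subgroups}(3)), so $\bar c=\bar a^{+}=e$ and thus $\fix(\bar\varphi)\subset\bar G^{0}$. Therefore any fixed point $g$ of $\varphi$ lies in $G^{0}K_{\varphi}$; writing $g=g_{0}k$ with $g_{0}\in G^{0}$, $k\in K_{\varphi}$, the relation $\varphi(g)=g$ yields $\varphi(g_{0})^{-1}g_{0}=\varphi(k)k^{-1}\in G^{0}\cap K_{\varphi}$. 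This element lies in $G_{\varphi}$, where $\varphi$ and hence $\varphi^{d}$ is injective, and in $K_{\varphi}=\ker(\varphi^{d})_{0}$, so it is trivial; then $\varphi(g_{0})=g_{0}$ and $\varphi(k)=k$, the latter with $\varphi^{d}(k)=e$ giving $k=e$, whence $g=g_{0}\in G^{0}$.

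I expect the compact, semisimple eigenvalue step to be the conceptual core: the clean fact that an automorphism of a compact Lie algebra acts by modulus-$1$ eigenvalues on its derived subalgebra is exactly what collapses $\fh\cap\fg^{\pm}$ into $\fz(\fh)$, and once this is in hand both the centrality and the decomposability claims follow smoothly. The more delicate bookkeeping is in the last assertion: the fixed-point set need not be connected, so one cannot simply read it off from $\ker(\phi-\id)\subset\fg^{0}$. The reduction modulo $K_{\varphi}$ together with the uniqueness of the solvable decomposition — needed to transport $\varphi$-invariance to the individual $G^{+}$, $G^{0}$, $G^{-}$ factors — is where care is required, and checking that the quotient map carries the dynamic subgroups of $G$ onto those of $\bar G$ is the step most likely to demand attention.
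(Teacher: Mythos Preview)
The paper does not actually prove Theorem~\ref{compact}: as announced at the start of Section~2, Propositions~\ref{subgroups}, \ref{automorphism} and Theorem~\ref{compact} are quoted from \cite{AR-FDS} without proof, so there is no argument in the present paper against which to compare your proposal.

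That said, your approach is sound and is the natural one. The Killing-form observation --- that an automorphism of a compact semisimple Lie algebra is orthogonal for $-B$ and hence has only unit-modulus eigenvalues --- is exactly the right lever for the centrality claim, and your derivation of decomposability from it (either directly, or by noting that $G^{0}\subset G_{\varphi}$ is compact and invoking Proposition~\ref{subgroups}(5)) is clean. Two minor remarks. First, the statement does not assume $H$ connected, so strictly speaking one must still argue why $H^{\pm}\subset Z_{H}$ rather than merely $Z_{H_{0}}$; you flag this but do not close it. Second, for the fixed-point assertion your reduction modulo $K_{\varphi}$ works but is heavier than necessary: a fixed point $g$ satisfies $g=\varphi^{d}(g)\in\mathrm{Im}(\varphi^{d})=G_{\varphi}$ directly by Proposition~\ref{subgroups}(2), after which one may decompose $g=g^{+}g^{0}g^{-}$ inside $G_{\varphi}$ using Propositions~\ref{subgroups}(4) and~\ref{automorphism}(1) and run exactly the uniqueness/contraction argument you give on the quotient, without ever passing to $G/K_{\varphi}$.
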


\section{Topological entropy}

In this section we show how the dynamic subgroups induced by any
endomorphism can be used in order to get information about their dynamic
behavior. We analyze the concept of topological entropy for continuous
proper maps on locally compact separable metric spaces as introduced by
Caldas and Patr\~{a}o in \cite{Pat}.

Let $X$ be a topological space and $\xi :X\rightarrow X$ a proper map, that
is, $\xi $ is a continuous map with the property that its pre-image of any
compact set is still a compact set. An \textbf{admissible} cover of $X$ is a
finite open cover $\Psi $ such that any element $A\in \Psi $ has a compact
closure or its complement is a compact set. For a given admissible cover $%
\Psi $ of $X$ and any $n\in \mathbb{N}$ the set 
\begin{equation*}
\Psi ^{n}:=\{A_{0}\cap \xi ^{-1}(A_{1})\cap \cdots \cap \xi ^{-n}(A_{n}),\;
\;A_{i}\in \Psi \}
\end{equation*}%
is also an admissible cover of $X$. We denote by $N(\Psi ^{n})$ the smallest
cardinality of all subcovers of $\Psi ^{n}$. The \textbf{topological entropy}
of $\xi $ is then defined by 
\begin{equation*}
h_{\mathrm{top}}\left( \xi \right) :=\sup_{\Psi }h_{\mathrm{top}}\left( \xi
,\Psi \right) ,\; \; \mbox{ where }\; \; \;h_{\mathrm{top}}\left( \xi ,\Psi
\right) :=\lim_{n\rightarrow \infty }\frac{1}{n}\log N(\Psi ^{n})
\end{equation*}%
and $\Psi $ varies among all the admissible subcovers of $X$.

It is worth to notice that when $X$ is a compact space, the $\mathrm{top}$%
-entropy definition coincide with the Adler-Konheim-MacAndrew topological
entropy notion, \cite{AKM}.

For continuous maps on metric spaces, we have also the concept of entropy
introduced by Bowen in \cite{RB} defined as follows: Let $(X, d)$ be a
metric space and $\xi:X\rightarrow X$ a continuous map. Given a subset $%
Y\subset X$, $\varepsilon>0$ and $n\in \mathbb{N}$ we say that $S\subset X$
is an $(n, \varepsilon)$-spanning set of $Y$ if for every $y\in Y$ there
exists $x\in S$ such that 
\begin{equation*}
d\left(\xi^i\left(x\right), \xi^i\left(y\right)\right)<\varepsilon \; \; 
\mbox{
for } \; \;0\leq i\leq n.
\end{equation*}
We denote by $s(n, \varepsilon, Y)$ the smallest cardinality of any $(n,
\varepsilon)$-spanning set of $Y$ and define 
\begin{equation*}
s(\varepsilon, Y):=\lim_{n\rightarrow \infty}\frac{1}{n}\log s(n,
\varepsilon, Y)\; \mbox{ and }\; \; h(\xi, Y):=\lim_{\varepsilon \rightarrow
0^+}s(\varepsilon, Y).
\end{equation*}

The \textbf{Bowen entropy} of $\xi $ with respect to the metric $d$ is
defined by 
\begin{equation*}
h_{d}\left( \xi \right) :=\sup_{K}h\left( \xi ,K\right)
\end{equation*}%
where $K$ varies among the compact subsets of $X$.

We also mention the concept of \textbf{$d$-entropy} defined by 
\begin{equation*}
h^{d}\left( \xi \right) :=\sup_{Y}h\left( \xi ,Y\right) .
\end{equation*}%
In this case $Y$ varies among all the subsets of $X$. It holds that 
\begin{equation*}
h_{d}\left( \xi \right) \leq h^{d}\left( \xi \right) .
\end{equation*}

Next, we introduce the concept of an admissible metric. The metric $d$ is 
\textbf{admissible} if it satisfies the following conditions:

\begin{itemize}
\item[(i)] If $\Psi _{\delta }:=\{B_{\delta }(x_{1}),\ldots ,B_{\delta
}(x_{n})\}$ is a cover of $X$ then for every $\delta \in (a,b)$ with $0<a<b$%
, there exists $\delta _{0}\in (a,b)$ such that $\Psi _{\delta _{0}}$ is
admissible;

\item[(ii)] Every admissible cover of $X$ has a Lebesgue number.
\end{itemize}

The following result from \cite{Pat}, Proposition 2.2, show that for an
admissible metrics the concept of $d$-entropy and topological entropy
coincides.

\begin{proposition}
Let $\xi$ be a continuous map on the metric space $(X, d)$. If the metric $d$
is admissible, then 
\begin{equation*}
h_{\mathrm{top}}\left(\xi \right)=h^d\left(\xi \right).
\end{equation*}
\end{proposition}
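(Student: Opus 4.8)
The plan is to establish the two inequalities $h_{\mathrm{top}}(\xi)\le h^{d}(\xi)$ and $h^{d}(\xi)\le h_{\mathrm{top}}(\xi)$ separately, translating back and forth between admissible covers and $(n,\varepsilon)$-spanning sets. The bridge in the first direction is condition (ii) (existence of a Lebesgue number), and in the second direction condition (i) (existence of admissible covers by arbitrarily small balls). A preliminary reduction I would record is that, since any $(n,\varepsilon)$-spanning set of $X$ spans every subset $Y\subset X$, one has $s(n,\varepsilon,Y)\le s(n,\varepsilon,X)$, whence $h^{d}(\xi)=\sup_{Y}h(\xi,Y)=h(\xi,X)$. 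Throughout I would use the dynamical ball $B_{n}(x,\varepsilon):=\bigcap_{i=0}^{n}\xi^{-i}\bigl(B_{\varepsilon}(\xi^{i}(x))\bigr)$ as the object interpolating between the two entropies.

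For $h_{\mathrm{top}}(\xi)\le h^{d}(\xi)$, fix an admissible cover $\Psi$ and let $\lambda>0$ be a Lebesgue number for it, which exists by (ii). Choose $\varepsilon$ with $2\varepsilon<\lambda$ and let $S$ be a minimal $(n,\varepsilon)$-spanning set of $X$. Each factor $B_{\varepsilon}(\xi^{i}(x))$ has diameter at most $2\varepsilon<\lambda$, hence lies in some $A_{i}\in\Psi$; consequently $B_{n}(x,\varepsilon)\subset A_{0}\cap\xi^{-1}(A_{1})\cap\cdots\cap\xi^{-n}(A_{n})\in\Psi^{n}$. Since the balls $\{B_{n}(x,\varepsilon):x\in S\}$ cover $X$, the corresponding elements of $\Psi^{n}$ form a subcover, so $N(\Psi^{n})\le s(n,\varepsilon,X)$. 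Dividing by $n$, letting $n\to\infty$, and recalling $s(\varepsilon,X)\le h(\xi,X)=h^{d}(\xi)$, I obtain $h_{\mathrm{top}}(\xi,\Psi)\le h^{d}(\xi)$; taking the supremum over admissible $\Psi$ gives the inequality.

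For the reverse inequality I would fix $\varepsilon>0$ and, using condition (i), build an admissible cover $\Psi$ whose relatively compact elements are balls of radius smaller than $\varepsilon/2$. If two points $z,w$ lie in the same element of $\Psi^{n}$ built only from such small balls, then $d(\xi^{i}(z),\xi^{i}(w))<\varepsilon$ for $0\le i\le n$, so choosing one representative from each element of a minimal subcover of $\Psi^{n}$ produces an $(n,\varepsilon)$-spanning set. This yields $s(n,\varepsilon,X)\le N(\Psi^{n})$, and after dividing by $n$, sending $n\to\infty$ and then $\varepsilon\to0^{+}$, it would give $h^{d}(\xi)=h(\xi,X)\le h_{\mathrm{top}}(\xi)$.

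The main obstacle is precisely the element of $\Psi$ whose complement is compact: because $X$ is not compact, no admissible cover consists solely of small balls, and this co-compact element has infinite diameter, so the diameter argument of the previous paragraph does not control points that escape to infinity, and a priori $s(n,\varepsilon,X)$ may be infinite. I expect to resolve this by exploiting that $\xi$ is \emph{proper}: the forward orbit of the compact part meets only finitely many symbols at each stage, so the co-compact element of $\Psi^{n}$ can be absorbed as a single symbol contributing nothing to the exponential growth rate of $N(\Psi^{n})$. Checking that properness, together with conditions (i)--(ii), makes the count of small-diameter symbols dominate uniformly as $\varepsilon\to0^{+}$ is the delicate point where the admissibility of the metric is genuinely used, and it is where I would concentrate the technical effort.
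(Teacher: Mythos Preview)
The paper does not prove this proposition at all: it is quoted verbatim as Proposition~2.2 of Patr\~ao's paper \cite{Pat}, with the sentence ``The following result from \cite{Pat}, Proposition 2.2, shows that for admissible metrics the concept of $d$-entropy and topological entropy coincides'' serving as the entire justification. So there is no in-paper argument to compare your proposal against.

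As for the proposal itself, the inequality $h_{\mathrm{top}}(\xi)\le h^{d}(\xi)$ is handled cleanly via the Lebesgue number, but the reverse inequality is left genuinely open. You correctly identify that an admissible cover must contain a co-compact element $A_\infty$ of uncontrolled diameter, and that two points whose first $n$ iterates all land in $A_\infty$ need not be $(n,\varepsilon)$-close; you then say only that you ``expect'' properness to fix this and that it is ``where I would concentrate the technical effort.'' That is not a proof, and the difficulty is real: on a non-totally-bounded $X$ one has $s(n,\varepsilon,X)=\infty$, so your reduction $h^{d}(\xi)=h(\xi,X)$ gives $h^{d}(\xi)=\infty$ outright, and no amount of bounding $N(\Psi^{n})$ from below will help. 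In Patr\~ao's original argument the point is rather that an admissible metric forces $X$ to be totally bounded (it is, up to the axiomatisation (i)--(ii), the restriction of a metric on the compactification $X_\infty$), so finite covers by $\delta$-balls exist and condition~(i) is never vacuous; one can then treat the single co-compact element as the ball around $\infty$ and run the classical compact-space comparison. Your sketch does not use this, and without it the reverse inequality does not go through.
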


From here every topological space under consideration will be a locally
compact separable metric spaces. Let $X$ be a topological space and denote
by $X_{\infty }$ the one point compactification of $X$. We know that $%
X_{\infty }$ is defined as the disjoint union of $X$ with $\{ \infty \}$
where $\infty $ is some point that does not belongs to $X$ called the 
\textbf{point at the infinite}. The topology in $X_{\infty }$ consists by
the former open sets in $X$ and by the sets $U$ $\cup $ $\{ \infty \}$,
where the complement of $U$ in $X$ is compact. Let $Y$ be a topological
space and $\xi :X\rightarrow Y$ a proper map. Define $\widetilde{\xi }%
:X_{\infty }\rightarrow Y_{\infty }$ by 
\begin{equation*}
\widetilde{\xi }(x)=\left \{ 
\begin{array}{cc}
\xi (x), & x\neq \infty _{X} \\ 
\infty _{Y} & x=\infty _{X}%
\end{array}%
\right.
\end{equation*}%
then, $\widetilde{\xi }$ is a continuous map called the extension of $\xi $.
The following result (Proposition 2.3 of \cite{Pat}) assures the existence
of an admissible metrics on the topological spaces that we are considering
in here.

\begin{proposition}
Let $\xi $ be a proper map of $X$ and consider the metric $d$ in $X$
obtained by the restriction of some metric $\widetilde{d}$ on $X_{\infty }$.
It turns out that $d$ is an admissible metric and 
\begin{equation*}
h^{d}(\xi )=h^{\widetilde{d}}\left( \widetilde{\xi }\, \right) ,
\end{equation*}%
where $\widetilde{\xi }$ is the extension of $\xi $. In particular, $h_{%
\mathrm{top}}\left( \xi \right) =h_{\mathrm{top}}\left( \widetilde{\xi }\,
\right) $.
\end{proposition}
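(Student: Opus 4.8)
The plan is to prove the two assertions in turn: first that the restricted metric $d$ is admissible, then the identity $h^{d}(\xi)=h^{\widetilde{d}}(\widetilde{\xi})$, after which the statement about $h_{\mathrm{top}}$ drops out from the previous proposition together with the compactness of $X_{\infty}$.

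For admissibility I would start from the basic dichotomy for a metric ball $B_{\delta}(x)$ with $x\in X$: since $d$ is the restriction of $\widetilde{d}$ and $X_{\infty}$ is compact, the $\widetilde{d}$-closed ball of radius $\delta$ about $x$ is compact. If $\widetilde{d}(x,\infty)>\delta$ then $\infty$ lies outside that closed ball, so $\mathrm{cl}(B_{\delta}(x))\subset X$ is compact; if $\widetilde{d}(x,\infty)<\delta$ then $\infty$ lies in the $\widetilde{d}$-ball and $X\setminus B_{\delta}(x)$ equals $X_{\infty}$ minus that ball, a closed subset of $X_{\infty}$ avoiding $\infty$, hence a compact subset of $X$. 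Thus a single ball is admissible except possibly when $\delta=\widetilde{d}(x,\infty)$. For condition (i), given centres $x_{1},\dots,x_{n}$ the only radii at which some $B_{\delta}(x_{i})$ can fail to be admissible form the finite set $\{\widetilde{d}(x_{i},\infty)\}$, so choosing $\delta_{0}\in[\delta,b)$ outside this finite set keeps $\Psi_{\delta_{0}}$ a cover (the radii only grow) and makes every member admissible. For condition (ii), I would extend an admissible cover $\Psi$ of $X$ to an open cover $\Psi_{\infty}$ of $X_{\infty}$ by replacing each member with compact complement by its union with $\{\infty\}$ (open in $X_{\infty}$) and leaving the relatively compact members unchanged; when $X$ is non-compact at least one member must have compact complement, so $\Psi_{\infty}$ covers $\infty$ as well. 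A Lebesgue number $\lambda$ of $\Psi_{\infty}$ for $\widetilde{d}$ is then a Lebesgue number of $\Psi$ for $d$, since any $S\subset X$ of $d$-diameter $<\lambda$ lies in some member of $\Psi_{\infty}$ and, as $\infty\notin S$, in fact lies in the corresponding member of $\Psi$.

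For the entropy identity the first observation is that $h(\xi,\cdot)$ is monotone in its set argument, so $h^{d}(\xi)=h(\xi,X)$ and $h^{\widetilde{d}}(\widetilde{\xi})=h(\widetilde{\xi},X_{\infty})$; moreover, since $\infty$ is a fixed point of $\widetilde{\xi}$, adjoining it to any spanning set of $X$ shows $h(\widetilde{\xi},X_{\infty})=h(\widetilde{\xi},X)$, and it remains to prove $h(\xi,X)=h(\widetilde{\xi},X)$. One inequality is immediate: an $(n,\varepsilon)$-spanning set for $X$ inside $X$ with respect to $\xi$ is also one with respect to $\widetilde{\xi}$, because $d$ and $\widetilde{d}$ agree on $X$ and $\xi^{i}=\widetilde{\xi}^{i}$ there; writing $s_{\xi}$ and $s_{\widetilde{\xi}}$ for the minimal spanning numbers this gives $s_{\widetilde{\xi}}(n,\varepsilon,X)\le s_{\xi}(n,\varepsilon,X)$ and hence $h(\widetilde{\xi},X)\le h(\xi,X)$.

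The heart of the matter, and the step I expect to be the main obstacle, is the reverse inequality, where a minimal $\widetilde{\xi}$-spanning set $S\subset X_{\infty}$ may use the point $\infty$, which is unavailable in $X$. The key remark is that the set $W$ of points $y\in X$ whose orbit is $\varepsilon$-shadowed by $\infty$, i.e.\ $\widetilde{d}(\infty,\xi^{i}(y))<\varepsilon$ for $0\le i\le n$, is automatically $(n,2\varepsilon)$-small: for $y,y'\in W$ the triangle inequality yields $\widetilde{d}(\xi^{i}(y),\xi^{i}(y'))<2\varepsilon$ for all such $i$, so a single point $y_{*}\in W\subset X$ already $(n,2\varepsilon)$-spans all of $W$. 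Replacing $\infty$ in $S$ by such a $y_{*}$ produces $S'\subset X$ with $|S'|\le|S|$ that $(n,2\varepsilon)$-spans $X$ for $\xi$, giving $s_{\xi}(n,2\varepsilon,X)\le s_{\widetilde{\xi}}(n,\varepsilon,X)$; passing to the limits defining $h$ yields $h(\xi,X)\le h(\widetilde{\xi},X)$. Chaining the equalities gives $h^{d}(\xi)=h(\xi,X)=h(\widetilde{\xi},X)=h(\widetilde{\xi},X_{\infty})=h^{\widetilde{d}}(\widetilde{\xi})$. Finally, because $d$ is admissible the preceding proposition gives $h_{\mathrm{top}}(\xi)=h^{d}(\xi)$, while on the compact space $X_{\infty}$ the metric $\widetilde{d}$ is admissible (every finite open cover is admissible and has a Lebesgue number), so the same proposition gives $h^{\widetilde{d}}(\widetilde{\xi})=h_{\mathrm{top}}(\widetilde{\xi})$; combining these proves $h_{\mathrm{top}}(\xi)=h_{\mathrm{top}}(\widetilde{\xi})$.
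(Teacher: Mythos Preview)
The paper does not supply its own proof of this proposition: it is quoted verbatim as Proposition~2.3 of \cite{Pat} and used as a black box. Consequently there is nothing in the present paper to compare your argument against.

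That said, your proof is correct and stands on its own. The admissibility argument is sound: the only radii at which a ball $B_\delta(x_i)$ can fail to have either compact closure or compact complement are the finitely many values $\widetilde d(x_i,\infty)$, and the Lebesgue-number argument via extension to $X_\infty$ is standard. For the entropy identity your chain $h^d(\xi)=h(\xi,X)=h(\widetilde\xi,X)=h(\widetilde\xi,X_\infty)=h^{\widetilde d}(\widetilde\xi)$ is valid; the delicate step, replacing $\infty$ in a minimal $\widetilde\xi$-spanning set by any point of the set it shadows at cost of doubling $\varepsilon$, is exactly the right idea and is handled cleanly. Two small points worth making explicit: in the compact case $X=X_\infty\setminus\{\infty\}$ the admissibility conditions are trivially satisfied, so your ``$X$ non-compact'' proviso in condition~(ii) causes no loss; and in the replacement step, if the set $W$ shadowed by $\infty$ is empty you may simply drop $\infty$ rather than replace it.
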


Let $Y\subset X$ be a closed subspace of $X$. By taking the closure of $Y$
in $X_{\infty }$ we obtain that $\mathrm{cl}_{X_{\infty }}(Y)=Y_{\infty }$. By
Proposition 3.2, it follows that the restriction $d_{Y}$ of an admissible
metric $d$ to $Y$ is an admissible metric in $Y$. Moreover, if $\xi $ is a
proper map of $X$ and $Y$ is $\xi $-invariant, we have that $\widetilde{\xi }%
|_{Y_{\infty }}=\widetilde{\xi |_{Y}}$. Therefore, we get the following
inequality.

\begin{proposition}
\label{lower} If $\xi:X\rightarrow X$ is a proper map and $Y\subset X$ is a
closed $\xi$-invariant subspace, then 
\begin{equation*}
h_{\mathrm{top}}\left(\xi|_{Y}\right)\leq h_{\mathrm{top}}\left(\xi \right).
\end{equation*}
\end{proposition}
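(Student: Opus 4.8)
The plan is to leverage Proposition 3.2, which already establishes the equality $h^{d}(\xi)=h^{\widetilde{d}}(\widetilde{\xi})$ relating the $d$-entropy of a proper map to the entropy of its extension on the one-point compactification. The strategy is to transfer the problem from the non-compact setting to the compact setting, where the monotonicity of topological entropy under passing to closed invariant subspaces is classical.

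First I would use the hypothesis that $Y\subset X$ is closed to identify the closure of $Y$ in $X_\infty$. As remarked just before the statement, taking closures gives $\mathrm{cl}_{X_{\infty}}(Y)=Y_{\infty}$, so $Y_\infty$ sits as a closed subspace of the compact space $X_\infty$. Next, since $Y$ is $\xi$-invariant and $\xi$ is proper, the extension $\widetilde{\xi}$ maps $Y_\infty$ into itself; indeed the identity $\widetilde{\xi}|_{Y_\infty}=\widetilde{\xi|_{Y}}$ noted above says precisely that restricting the extension to $Y_\infty$ coincides with the extension of the restriction $\xi|_Y$. Thus $\widetilde{\xi|_Y}$ is a continuous self-map of the compact space $Y_\infty$, and $Y_\infty$ is a closed $\widetilde{\xi}$-invariant subset of the compact space $X_\infty$.

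With both maps now living on compact spaces, I would invoke the standard fact (from the Adler--Konheim--McAndrew theory, which agrees with $h_{\mathrm{top}}$ on compact spaces as noted in the text) that topological entropy is monotone under restriction to a closed invariant subspace:
\begin{equation*}
h_{\mathrm{top}}\left(\widetilde{\xi}|_{Y_\infty}\right)\leq h_{\mathrm{top}}\left(\widetilde{\xi}\right).
\end{equation*}
This follows because any admissible (here, ordinary finite open) cover of $X_\infty$ restricts to an open cover of $Y_\infty$, and subcovers of the restricted iterated cover are no larger than those of the ambient one.

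Finally I would translate this inequality back down to $X$. Applying Proposition 3.2 to $\xi$ gives $h_{\mathrm{top}}(\xi)=h_{\mathrm{top}}(\widetilde{\xi})$, and applying it to the proper map $\xi|_Y$ on $Y$ (whose admissible metric $d_Y$ is the restriction of $d$, as observed) gives $h_{\mathrm{top}}(\xi|_Y)=h_{\mathrm{top}}(\widetilde{\xi|_Y})=h_{\mathrm{top}}(\widetilde{\xi}|_{Y_\infty})$. Chaining these identities with the compact-space inequality yields $h_{\mathrm{top}}(\xi|_Y)\leq h_{\mathrm{top}}(\xi)$, as desired. The only point requiring care is verifying that $\xi|_Y$ is itself a proper map and that $Y$ with the restricted metric is again a locally compact separable metric space, so that Proposition 3.2 genuinely applies; this is the step I expect to be the main (though mild) obstacle, and it follows from $Y$ being closed in $X$.
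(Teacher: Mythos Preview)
Your proposal is correct and follows essentially the same approach as the paper: both pass to the one-point compactifications via Proposition~3.2, use that $Y_\infty$ is a closed $\widetilde{\xi}$-invariant subset of the compact space $X_\infty$ with $\widetilde{\xi}|_{Y_\infty}=\widetilde{\xi|_Y}$, invoke classical monotonicity of entropy on compact spaces, and then translate back. The only cosmetic difference is that the paper phrases the compact-space inequality via the Bowen metric entropy chain $h_{\widetilde d}(\widetilde\xi)\geq h_{\widetilde d}(\widetilde\xi,Y_\infty)$ rather than the AKM cover formulation you use.
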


\begin{proof}
By considering the extension $\widetilde{\xi}$ of $\xi$ and an admissible
metric $d$ on $X$, given by the restriction of some metric $\widetilde{d}$
on $X_{\infty}$, we have that 
\begin{equation*}
h_{\mathrm{top}}(\xi)=h^d(\xi)=h^{\widetilde{d}}\left(\widetilde{\xi}\,
\right)=h_{\widetilde{d}}\left(\widetilde{\xi}\right) \geq h_{\widetilde{d}%
}\left(\widetilde{\xi}, Y_{\infty}\right)=h_{\widetilde{d}%
_{Y_{\infty}}}\left(\widetilde{\xi}|_{Y_{\infty}}\right)
\end{equation*}
\begin{equation*}
=h_{\widetilde{d_Y}}\left(\widetilde{\xi|_{Y}}\right)=h^{\widetilde{d_Y}%
}\left(\widetilde{\xi|_{Y}}\right)=h^{d_Y}\left(\xi|_{Y}\right)=h_{\mathrm{%
top}}\left(\xi|_Y\right)
\end{equation*}
as stated.
\end{proof}

The proof of the next two propositions can be found in \cite{Pat2},
Proposition 2.2 and \cite{Pat}, Proposition 2.1, respectively.

\begin{proposition}
\label{product} Let $X$ and $Y$ be topological spaces and $\xi :X\rightarrow
X$, $\zeta :Y\rightarrow Y$ a proper map. Then 
\begin{equation*}
h_{\mathrm{top}}\left( \xi \times \zeta \right) =h_{\mathrm{top}}\left( \xi
\right) +h_{\mathrm{top}}\left( \zeta \right) .
\end{equation*}
\end{proposition}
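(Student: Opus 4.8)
The plan is to deduce the statement from the metric description of the entropy, reducing it to the product formula for the $d$-entropy of an \emph{admissible} metric, where the two factors genuinely decouple. First I would fix admissible metrics $d_X$ on $X$ and $d_Y$ on $Y$; these exist by Proposition~3.2, obtained as restrictions of metrics on $X_\infty$ and $Y_\infty$. On $X\times Y$ I would use the product metric $d=\max(d_X,d_Y)$, which induces the product topology. If $d$ is admissible, then Proposition~3.1 gives $h_{\mathrm{top}}(\xi\times\zeta)=h^d(\xi\times\zeta)$, $h_{\mathrm{top}}(\xi)=h^{d_X}(\xi)$ and $h_{\mathrm{top}}(\zeta)=h^{d_Y}(\zeta)$, so the whole statement becomes the identity $h^d(\xi\times\zeta)=h^{d_X}(\xi)+h^{d_Y}(\zeta)$.

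Granting admissibility, the product formula for the $d$-entropy is the routine part. For the upper bound, any subset $W\subseteq X\times Y$ satisfies $W\subseteq\pi_X(W)\times\pi_Y(W)$, and with the metric $\max(d_X,d_Y)$ the product of an $(n,\varepsilon)$-spanning set of $\pi_X(W)$ with one of $\pi_Y(W)$ spans $\pi_X(W)\times\pi_Y(W)$; hence $s(n,\varepsilon,W)\le s(n,\varepsilon,\pi_X(W))\,s(n,\varepsilon,\pi_Y(W))$, and taking $\tfrac1n\log$, then $n\to\infty$, $\varepsilon\to0^+$ and the supremum over $W$ yields $h^d(\xi\times\zeta)\le h^{d_X}(\xi)+h^{d_Y}(\zeta)$. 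For the lower bound I would pass to separated sets (whose growth rate computes the same $h$ as spanning sets, via the standard comparison $r(n,2\varepsilon)\le s(n,\varepsilon)\le r(n,\varepsilon)$, the relevant limits existing by subadditivity): the product of an $(n,\varepsilon)$-separated set of $A\subseteq X$ and one of $B\subseteq Y$ is $(n,\varepsilon)$-separated in $A\times B$, so $h(\xi\times\zeta,A\times B)\ge h(\xi,A)+h(\zeta,B)$, and the supremum over $A$ and $B$ gives $h^d(\xi\times\zeta)\ge h^{d_X}(\xi)+h^{d_Y}(\zeta)$.

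The hard part is the flagged claim that $d=\max(d_X,d_Y)$ is admissible, and this is where the non-compact setting really bites: the one-point compactification of a product is \emph{not} the product of the one-point compactifications, since $X_\infty\times Y_\infty$ carries the whole ``cross at infinity'' $(\{\infty_X\}\times Y_\infty)\cup(X_\infty\times\{\infty_Y\})$ which $(X\times Y)_\infty$ collapses to a single point, and $d$ is the restriction of a metric living on $X_\infty\times Y_\infty$, not on $(X\times Y)_\infty$. So I cannot simply invoke Proposition~3.2, and I would instead verify conditions (i) and (ii) directly: the family of admissible covers of $X\times Y$ is purely topological, and for (ii) one checks that every co-bounded member $A=(X\times Y)\setminus C$ of such a cover has $d$-distance bounded below from its compact complement $C$ along every escaping direction (because $C$ projects into compacta of $X$ and $Y$, while $d$ is the maximum of the two coordinate distances), producing a uniform Lebesgue number; condition (i) is handled by perturbing the radius so that each ball has either relatively compact closure or compact complement. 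As an independent check on the upper bound, one can also note that the collapsing map $q\colon X_\infty\times Y_\infty\to(X\times Y)_\infty$ is a topological semiconjugacy from $\widetilde\xi\times\widetilde\zeta$ to $\widetilde{\xi\times\zeta}$, so the classical factor inequality together with the Adler--Konheim--MacAndrew product formula on the compact spaces gives $h_{\mathrm{top}}(\xi\times\zeta)=h_{\mathrm{top}}(\widetilde{\xi\times\zeta})\le h_{\mathrm{top}}(\widetilde\xi\times\widetilde\zeta)=h_{\mathrm{top}}(\xi)+h_{\mathrm{top}}(\zeta)$.
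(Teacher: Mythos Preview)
The paper does not prove this proposition at all: the sentence immediately preceding it reads ``The proof of the next two propositions can be found in \cite{Pat2}, Proposition 2.2 and \cite{Pat}, Proposition 2.1, respectively.''  So there is no in-paper argument to compare your attempt against; the authors simply import the result from Caldas--Patr\~ao.

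On the substance of your sketch: you correctly isolate the real issue, namely that $(X\times Y)_\infty$ is the smash product $X_\infty\wedge Y_\infty$ rather than the cartesian product, so Proposition~3.2 does not hand you admissibility of $d=\max(d_X,d_Y)$ for free.  Your alternative upper bound via the collapse $q\colon X_\infty\times Y_\infty\to (X\times Y)_\infty$ together with the classical Adler--Konheim--McAndrew product formula on the compact side is clean and correct.  The weak point is the direct verification of condition~(i) for the max metric.  A $d$-ball is $B^{d_X}_\delta(x)\times B^{d_Y}_\delta(y)$; if $\delta$ exceeds the $d_X$-distance from $x$ to $\infty_X$ but not the $d_Y$-distance from $y$ to $\infty_Y$, this ball is a ``strip'' of the form $X\times K$ with $K\subset Y$ relatively compact, which has neither compact closure nor compact complement in $X\times Y$.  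A perturbation of $\delta$ within a prescribed interval need not eliminate such strips for all centers at once, so your one-line dismissal of (i) does not close the gap.  Since your lower bound via separated sets needs $h_{\mathrm{top}}=h^d$ for this specific $d$, it currently rests on the unproven admissibility claim.  If you want a self-contained argument, one option is to work throughout on the compact spaces $X_\infty$, $Y_\infty$, $(X\times Y)_\infty$ and control the fibre entropy of $q$ (Bowen's inequality), or else to choose an admissible metric on $(X\times Y)_\infty$ directly and compare spanning numbers there with those on the factors; either route avoids the max-metric issue.
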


\begin{proposition}
\label{projection} Let $X$ and $Y$ be topological spaces, $\xi :X\rightarrow
X$ and $\zeta :Y\rightarrow Y$ two proper maps. If $f:X\rightarrow Y$ is a
proper surjective map such that $f\circ \xi =\zeta \circ f$, then $h_{%
\mathrm{top}}(\xi )\geq h_{\mathrm{top}}(\zeta )$.
\end{proposition}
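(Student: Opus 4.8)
The plan is to establish the inequality directly at the level of the admissible-cover definition of $h_{\mathrm{top}}$, by pulling admissible covers back from $Y$ to $X$ along $f$. Given any admissible cover $\Phi$ of $Y$, I would form the preimage cover $f^{-1}(\Phi):=\{f^{-1}(B):B\in\Phi\}$, verify that it is an admissible cover of $X$, relate the refined cover $(f^{-1}\Phi)^{n}$ built from $\xi$ to the refined cover $\Phi^{n}$ built from $\zeta$, compare their minimal subcover cardinalities, and finally take the supremum over $\Phi$. Since $\Phi$ is a finite open cover and $f$ is continuous, $f^{-1}(\Phi)$ is automatically a finite open cover of $X$ (for any $x\in X$ the point $f(x)$ lies in some $B\in\Phi$, so $x\in f^{-1}(B)$).

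The first substantive step is to check admissibility of each $f^{-1}(B)$, and this is exactly where properness of $f$ enters. If $\overline{B}$ is compact, then $f^{-1}(\overline{B})$ is compact because $f$ is proper; since $\overline{f^{-1}(B)}\subseteq f^{-1}(\overline{B})$ is a closed subset of a compact set, $f^{-1}(B)$ has compact closure. If instead the complement $Y\setminus B$ is compact, then $X\setminus f^{-1}(B)=f^{-1}(Y\setminus B)$ is compact, again by properness. Thus each element of $f^{-1}(\Phi)$ satisfies one of the two admissibility conditions, so $f^{-1}(\Phi)$ is an admissible cover of $X$.

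Next I would use the intertwining relation. Iterating $f\circ\xi=\zeta\circ f$ gives $f\circ\xi^{i}=\zeta^{i}\circ f$, hence $\xi^{-i}(f^{-1}(B))=f^{-1}(\zeta^{-i}(B))$ for each $i$; since $f^{-1}$ commutes with intersections, this yields the key identity $(f^{-1}\Phi)^{n}=f^{-1}(\Phi^{n})$, where the left-hand refinement is formed with $\xi$ and the right-hand one with $\zeta$. Now surjectivity enters: if $\{f^{-1}(C_{j})\}$ is a minimal subcover of $(f^{-1}\Phi)^{n}$ with the $C_{j}\in\Phi^{n}$, then $\{C_{j}\}$ covers $Y$, because for each $y\in Y$ we may choose $x$ with $f(x)=y$, and $x\in f^{-1}(C_{j})$ forces $y\in C_{j}$. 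Consequently $N(\Phi^{n})\leq N((f^{-1}\Phi)^{n})$. Dividing by $n$, letting $n\to\infty$, and taking the supremum over admissible covers $\Phi$ of $Y$ gives $h_{\mathrm{top}}(\zeta)=\sup_{\Phi}h_{\mathrm{top}}(\zeta,\Phi)\leq h_{\mathrm{top}}(\xi)$, as required. I expect the only delicate point to be the admissibility verification in the second paragraph: one must confirm that properness of $f$ simultaneously preserves both the relatively compact and the cocompact types of cover elements under $f^{-1}$; once that is secured, the identity $(f^{-1}\Phi)^{n}=f^{-1}(\Phi^{n})$ and the surjectivity argument are routine set-theoretic bookkeeping.
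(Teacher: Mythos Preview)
Your argument is correct: pulling back admissible covers along the proper map $f$ preserves admissibility (properness handles both the compact-closure and cocompact cases), the conjugacy relation gives $(f^{-1}\Phi)^n=f^{-1}(\Phi^n)$, and surjectivity yields $N(\Phi^n)\le N((f^{-1}\Phi)^n)$. The paper does not supply its own proof of this proposition but simply refers to \cite{Pat}, Proposition~2.1; your direct cover-theoretic argument is the standard one and is presumably what appears there.
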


\subsection{Topological entropy of automorphisms}

In this section we show that all the information given by the topological
entropy of an automorphism $\varphi $ of $G$ is contained in the toral part
of the unstable subgroup of $\varphi $ in the radical of $G$.

\begin{definition}
Let $G$ be a Lie group and denote by $Z_{G}$ its center. The \textbf{toral}
component $T(G)$ of $G$ is the greatest compact connected Lie subgroup of $%
Z_G$.
\end{definition}

We start by giving a lower bound for the topological entropy.

\begin{proposition}
\label{lowertoral} If $\varphi$ is an automorphism of $G$, then 
\begin{equation*}
h_{\mathrm{top}}(\varphi)\geq h_{\mathrm{top}}\left(\varphi|_{T(G^+)}\right)
\end{equation*}
where $T(G^+)$ is the toral component of the subgroup $G^+$.
\end{proposition}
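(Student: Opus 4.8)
The plan is to realize the inequality as an instance of Proposition~\ref{lower}, which gives $h_{\mathrm{top}}(\varphi|_Y)\le h_{\mathrm{top}}(\varphi)$ for any closed $\varphi$-invariant subspace $Y$. Thus it suffices to produce a closed $\varphi$-invariant subgroup whose entropy equals $h_{\mathrm{top}}(\varphi|_{T(G^+)})$, and the natural candidate is $T(G^+)$ itself. So the first step is to verify that $T(G^+)$ is a \emph{closed} subgroup of $G$ and that it is $\varphi$-invariant; once both hold, Proposition~\ref{lower} applied with $Y=T(G^+)$ yields the claim immediately.

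For invariance, I would argue as follows. By Proposition~\ref{subgroups}(1) the dynamic subgroup $G^+$ is $\varphi$-invariant, and since $\varphi$ is an automorphism of $G$, its restriction $\varphi|_{G^+}$ is an automorphism of $G^+$ (this uses Remark~\ref{remark}, where $\phi|_{\fg^+}$ is shown to be injective, together with surjectivity coming from the automorphism property). Now $T(G^+)$ is a characteristic object of $G^+$: it is the unique greatest compact connected subgroup of the center $Z_{G^+}$, and any automorphism of $G^+$ carries the center isomorphically to itself, carries compact connected subgroups to compact connected subgroups, and hence fixes the maximal such subgroup of the center setwise. Therefore $\varphi(T(G^+))=T(G^+)$, so $T(G^+)$ is in particular $\varphi$-invariant.

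For closedness I would invoke compactness directly: $T(G^+)$ is by definition a compact subgroup of $G^+$, and since $G^+$ is a (immersed) Lie subgroup, the compactness of $T(G^+)$ forces it to be a closed subset of $G$ (the continuous injective image of a compact space into the Hausdorff space $G$ is closed). Thus $T(G^+)$ is a closed, $\varphi$-invariant subgroup of $G$, and $\varphi|_{T(G^+)}$ is a proper map on it (indeed a homeomorphism, as the restriction of an automorphism to a compact invariant subgroup). Applying Proposition~\ref{lower} with $Y=T(G^+)$ then gives
\begin{equation*}
h_{\mathrm{top}}\left(\varphi|_{T(G^+)}\right)\leq h_{\mathrm{top}}(\varphi),
\end{equation*}
which is exactly the assertion.

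The step I expect to require the most care is the verification that $\varphi|_{G^+}$ is genuinely an automorphism of $G^+$ so that it preserves the center and hence the toral component; the subtlety is that $G^+$ is an immersed, not necessarily embedded, subgroup, so one must be slightly careful to run the center/compactness argument intrinsically within $G^+$ (using the induced topology) rather than relying on the ambient topology of $G$. Everything else—the invariance of $G^+$, the characteristic nature of $T(G^+)$, and the final appeal to Proposition~\ref{lower}—is then routine.
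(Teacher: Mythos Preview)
Your proof is correct, but it follows a different, more direct route than the paper. You apply Proposition~\ref{lower} straight to $Y=T(G^{+})$, after checking that $T(G^{+})$ is compact (hence closed in $G$) and characteristic in $G^{+}$ (hence $\varphi$-invariant). The paper instead first uses that $G^{+}$ itself is closed (Proposition~\ref{automorphism}(2)) to obtain $h_{\mathrm{top}}(\varphi)\ge h_{\mathrm{top}}(\varphi|_{G^{+}})$ from Proposition~\ref{lower}, and then invokes the nilpotent case (Theorem~4.3 of \cite{Pat2}) to convert $h_{\mathrm{top}}(\varphi|_{G^{+}})$ into $h_{\mathrm{top}}(\varphi|_{T(G^{+})})$. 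Your argument is more elementary in that it avoids the external input from \cite{Pat2}; on the other hand, the paper's detour records the equality $h_{\mathrm{top}}(\varphi|_{G^{+}})=h_{\mathrm{top}}(\varphi|_{T(G^{+})})$, which it reuses verbatim in the proof of Theorem~\ref{solvable}. Finally, your caution about $G^{+}$ being merely immersed is unnecessary here: since $\varphi$ is an automorphism, Proposition~\ref{automorphism}(2) already guarantees that $G^{+}$ is closed in $G$, so the intrinsic and induced topologies on $G^{+}$ agree.
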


\begin{proof}
Since $\varphi $ is an automorphism, $G^{+}$ is a closed subgroup of $G$. By
Proposition \ref{lower} we know that $h_{\mathrm{top}}(\varphi )\geq h_{%
\mathrm{top}}(\varphi |_{G^{+}})$. Furthermore, $G^{+}$ is a connected
nilpotent Lie group and $\varphi |_{G^{+}}$ is an automorphism, so Theorem
4.3 of \cite{Pat2} assures that $h_{\mathrm{top}}(\varphi |_{G^{+}})=h_{%
\mathrm{top}}\left( \varphi |_{T(G^{+})}\right) $. Therefore, 
\begin{equation*}
h_{\mathrm{top}}(\varphi )\geq h_{\mathrm{top}}(\varphi |_{G^{+}})=h_{%
\mathrm{top}}\left( \varphi |_{T(G^{+})}\right) .
\end{equation*}
\end{proof}

As a consequence of Proposition 3.7 we obtain: in order to check if the
topological entropy of any automorphism of $G$ is not zero, it is enough to
restrict it to a considerable smaller subgroup of $G$. Actually, in the next
sections we show that for a vast class of Lie groups the topological entropy
of an automorphism is completely determined by its restriction to the toral
component of the unstable subgroup of the radical of $G$.

Furthermore, for an automorphism which turn out $G$ decomposable, we get

\begin{theorem}
\label{solvable} Let $G$ be a connected Lie group and $\varphi$ an
automorphism. If $G$ is decomposable, then 
\begin{equation*}
h_{\mathrm{top}}\left(\varphi \right)= h_{\mathrm{top}}\left(%
\varphi|_{T(G^+)}\right)
\end{equation*}
\end{theorem}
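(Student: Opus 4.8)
The plan is to prove the equality by establishing both inequalities, using the decomposition hypothesis to control the entropy contributions from each of the three dynamic subgroups $G^+$, $G^0$, and $G^-$. The lower bound $h_{\mathrm{top}}(\varphi)\geq h_{\mathrm{top}}(\varphi|_{T(G^+)})$ is already furnished by Proposition \ref{lowertoral}, so the entire substance of the theorem lies in proving the reverse inequality $h_{\mathrm{top}}(\varphi)\leq h_{\mathrm{top}}(\varphi|_{T(G^+)})$. Since $G$ is decomposable, we have $G_{\varphi}=G^+G^0G^-$; and because $\varphi$ is an automorphism, $G_{\varphi}=\mathrm{Im}(\varphi^d)=G$ (as the kernel subgroup $K_{\varphi}$ is trivial when $\varphi$ is invertible). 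The first step is therefore to exploit the product-type decomposition so that entropy splits as a sum over the three factors via Proposition \ref{product}, or more carefully via the projection estimate of Proposition \ref{projection}, since the decomposition need not be a direct product of groups but rather a product of closed subgroups meeting in $\{e\}$ by Proposition \ref{automorphism}.

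Next I would show that the stable and central subgroups contribute zero entropy. For $G^-$, the restriction $\varphi|_{G^-}$ is contracting (Proposition \ref{subgroups}, part 3), so all orbits converge to $e$; combined with the Remark's estimate $|\phi^m(Y)|\leq c^{-1}\mu^m|Y|$, this should force $h_{\mathrm{top}}(\varphi|_{G^-})=0$, intuitively because a single spanning point suffices asymptotically. For $G^0$, the key is the Remark's subexponential growth estimate: for any $a>0$ and $Z\in\mathfrak{g}^0$, one has $|\phi^m(Z)|\,\mu^{a|m|}\to 0$, meaning the central part grows slower than any exponential rate, which should yield zero Bowen entropy and hence zero topological entropy. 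These vanishing statements are presumably consequences of, or parallel to, the results already cited from \cite{Pat2} concerning nilpotent Lie groups, where entropy concentrates on the toral part. The remaining contribution is $h_{\mathrm{top}}(\varphi|_{G^+})$, and by the argument already used in the proof of Proposition \ref{lowertoral}—namely Theorem 4.3 of \cite{Pat2} applied to the connected nilpotent group $G^+$—this equals $h_{\mathrm{top}}(\varphi|_{T(G^+)})$.

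Assembling these pieces, I would argue $h_{\mathrm{top}}(\varphi)\leq h_{\mathrm{top}}(\varphi|_{G^+})+h_{\mathrm{top}}(\varphi|_{G^0})+h_{\mathrm{top}}(\varphi|_{G^-})=h_{\mathrm{top}}(\varphi|_{G^+})=h_{\mathrm{top}}(\varphi|_{T(G^+)})$, where the middle two terms vanish by the previous step. Combined with Proposition \ref{lowertoral}, this gives the desired equality.

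The main obstacle I anticipate is justifying the subadditivity $h_{\mathrm{top}}(\varphi)\leq\sum h_{\mathrm{top}}(\varphi|_{G^i})$ rigorously in the proper (non-compact) setting of Caldas--Patr\~ao entropy. The product formula of Proposition \ref{product} applies cleanly to a genuine Cartesian product $\xi\times\zeta$, but here $G=G^+G^0G^-$ is only a product of subgroups intersecting trivially, not an abstract direct product of groups on which $\varphi$ acts diagonally; the multiplication map $G^+\times G^0\times G^-\to G$ is a diffeomorphism onto $G$ (by the decomposition together with the triviality of the pairwise intersections from Proposition \ref{automorphism}), but it need not be a group homomorphism, so transporting $\varphi$ across it requires care. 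I would need to verify that this multiplication map is $\varphi$-equivariant—which holds because $\varphi$ is a homomorphism preserving each factor—and is proper, so that Proposition \ref{projection} can transfer the entropy bound. Controlling the central subgroup $G^0$, where the dynamics is neither expanding nor contracting but merely subexponential, is the most delicate point and is precisely where the Remark's refined estimate must be deployed to rule out any positive entropy.
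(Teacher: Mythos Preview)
Your proposal follows essentially the same route as the paper's proof: use the decomposition $G=G^{+}G^{0}G^{-}$ together with Propositions \ref{product} and \ref{projection} to bound $h_{\mathrm{top}}(\varphi)$ above by the sum over the three factors, show the $G^{0}$ and $G^{-}$ contributions vanish, reduce the $G^{+}$ contribution to $T(G^{+})$ via Theorem 4.3 of \cite{Pat2}, and combine with Proposition \ref{lowertoral}. The paper disposes of your two anticipated obstacles more briskly than you do: the multiplication map $G^{+}\times G^{0}\times G^{-}\to G$ is a homeomorphism (the pairwise intersections are trivial by Proposition \ref{automorphism}) and hence automatically proper and $\varphi$-equivariant, and the vanishing of entropy on $G^{0}$ and $G^{-}$ is obtained by citing Bowen's Corollary 16 in \cite{RB} (entropy is bounded by the sum of positive log-moduli of eigenvalues) together with Theorem 3.2 of \cite{Pat}, rather than by arguing directly from the estimates in Remark \ref{remark}.
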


\begin{proof}
By Proposition \ref{automorphism} we obtain 
\begin{equation*}
G^{+}\cap G^{0}=G^{+}\cap G^{-}=G^{-}\cap G^{0}=\{e\}
\end{equation*}
Since $G$ is decomposable, the map $f:G^{+}\times G^{0}\times
G^{-}\rightarrow G$ is a homeomorphism that commutates $\varphi $ and $%
\varphi |_{G^{+}}\times \varphi |_{G^{0}}\times \varphi |_{G^{-}}$.
Therefore, by Propositions \ref{product} and \ref{projection} we get that 
\begin{equation*}
h_{\mathrm{top}}(\varphi )\leq h_{\mathrm{top}}\left( \varphi
|_{G^{+}}\right) +h_{\mathrm{top}}\left( \varphi |_{G^{0}}\right) +h_{%
\mathrm{top}}\left( \varphi |_{G^{-}}\right) .
\end{equation*}%
Since $\varphi |_{G^{-}}$ and $\varphi |_{G^{0}}$ have only eigenvalues with
modulo smaller or equal to 1, Corollary 16 of \cite{RB} and Theorem 3.2 of 
\cite{Pat} implies that $h_{\mathrm{top}}(\varphi |_{G^{-}})=h_{\mathrm{top}%
}(\varphi |_{G^{0}})=0$ and therefore $h_{\mathrm{top}}(\varphi )\leq h_{%
\mathrm{top}}\left( \varphi |_{G^{+}}\right) $. Moreover, since $G^{+}$ is a
connected nilpotent Lie group, we obtain by Theorem 4.3 of \cite{Pat2} that $%
h_{\mathrm{top}}\left( \varphi |_{G^{+}}\right) =h_{\mathrm{top}}\left(
\varphi |_{T(G^{+})}\right) $ which, together with Proposition \ref%
{lowertoral} gives us 
\begin{equation*}
h_{\mathrm{top}}(\varphi )=h_{\mathrm{top}}\left( \varphi |_{T(G^{+})}\right)
\end{equation*}%
as we wish to prove.
\end{proof}

\begin{corollary}
If $\varphi $ is an automorphism of a solvable Lie group $G$ it follows that 
\begin{equation*}
h_{\mathrm{top}}(\varphi )=h_{\mathrm{top}}\left( \varphi
|_{T(G^{+})}\right) .
\end{equation*}%
In particular, when $G$ is nilpotent, we get $T(G^{+})=T(G)^{+}$.
\end{corollary}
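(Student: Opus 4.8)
The plan is to derive the corollary directly from Theorem~\ref{solvable} by verifying its hypothesis, namely that any automorphism of a solvable Lie group makes $G$ decomposable. The key observation I would invoke is Proposition~\ref{subgroups}, item~4: whenever $G_\varphi$ is solvable, the decomposition $G_\varphi = G^{+}G^{0}G^{-}$ holds. So the first step is to argue that when $G$ itself is solvable, $G_\varphi$ is solvable, and moreover that $G$ is decomposable in the sense of the paper's definition.

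Concretely, I would first note that since $\varphi$ is an automorphism of $G$, its restriction to $G_\varphi$ is an automorphism as well, so Proposition~\ref{automorphism} applies and the intersection conditions hold. Next, since $G$ is solvable, every Lie subgroup of $G$ is solvable; in particular $G_\varphi$ is a solvable Lie group. Hence Proposition~\ref{subgroups}(4) gives exactly the decomposition~(\ref{decomposition}) for $G_\varphi$. The only subtlety is to pass from "$G_\varphi$ decomposes" to "$G$ decomposes," but here I would use the fact that for an automorphism $\varphi$ we have $K_\varphi = \ker(\varphi^d)_0 = \{e\}$, so that $G = G_\varphi K_\varphi = G_\varphi$; thus $G$ itself satisfies~(\ref{decomposition}) and is decomposable by definition. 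Theorem~\ref{solvable} then yields $h_{\mathrm{top}}(\varphi) = h_{\mathrm{top}}(\varphi|_{T(G^{+})})$ immediately.

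For the final sentence, when $G$ is nilpotent I must identify $T(G^{+})$ with $T(G)^{+}$. The plan is to observe that for nilpotent $G$ the toral component $T(G)$ and the unstable subgroup $G^{+}$ interact cleanly because the grading by eigenvalue modulus is compatible with the central structure: $T(G)$ is a $\varphi$-invariant central torus, so it decomposes as $T(G)^{+}T(G)^{0}T(G)^{-}$, and $T(G)^{+}$ is precisely the maximal compact connected subgroup of $Z_G$ lying in $G^{+}$. Conversely, $T(G^{+})$ is the maximal compact connected central subgroup of $G^{+}$, and since $G^{+}$ is nilpotent with center contained in $Z_G$ in the nilpotent setting, these two tori coincide.

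\textbf{The main obstacle} I anticipate is this last identification $T(G^{+}) = T(G)^{+}$: it requires knowing that the toral component of $G^{+}$ sits inside the toral component of the ambient $G$, which rests on the fact that in a nilpotent group the maximal compact connected subgroup is automatically central. I would handle this by arguing that any compact connected subgroup of a connected nilpotent Lie group is a central torus, so both $T(G^{+})$ and $T(G)$ are determined by the unique maximal central torus, and the eigenvalue decomposition of that torus under $\phi$ matches $G^{+}$ with $T(G)^{+}$. The decomposability step, by contrast, I expect to be routine given the machinery already assembled in Section~2.
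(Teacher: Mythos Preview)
Your proof of the first assertion is essentially the paper's: invoke Proposition~\ref{subgroups}(4) to see that a solvable $G$ is decomposable (with the extra care, which the paper leaves implicit, that $K_{\varphi}=\{e\}$ for an automorphism, so $G=G_{\varphi}$), and then apply Theorem~\ref{solvable}.

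For the nilpotent identity $T(G^{+})=T(G)^{+}$ your route differs from the paper's. The paper argues as follows: the inclusion $T(G)^{+}\subset T(G^{+})$ is immediate since $T(G)^{+}$ is a compact connected subgroup of $Z_{G^{+}}$; for the reverse inclusion it quotes Proposition~4.1 of \cite{Pat2} to the effect that $G/T(G)$ is simply connected, so the image of the compact group $T(G^{+})$ under the projection $\pi:G\to G/T(G)$ is trivial, whence $T(G^{+})\subset T(G)$ and therefore $T(G^{+})\subset T(G)^{+}$. Your plan instead appeals to the structural fact that every compact connected subgroup of a connected nilpotent Lie group is a central torus, which forces $T(G^{+})\subset Z_{G}$ and hence $T(G^{+})\subset T(G)$. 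Both arguments reach the same intermediate conclusion $T(G^{+})\subset T(G)$ and are essentially two faces of the same fact about nilpotent groups; the paper's version has the advantage of citing a ready-made reference, while yours is more self-contained.

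One caution: your earlier sentence ``$G^{+}$ is nilpotent with center contained in $Z_{G}$'' is not a safe claim in general---there is no reason for $Z_{G^{+}}\subset Z_{G}$ even when $G$ is nilpotent. What you actually need (and what you correctly isolate in your obstacle paragraph) is only that the \emph{compact connected} part of $Z_{G^{+}}$ lands in $Z_{G}$, and that does follow from the fact you cite. So drop the unjustified inclusion of centers and go straight to the compact-subgroup argument.
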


\begin{proof}
By Proposition \ref{subgroups}, any solvable Lie group $G$ is decomposable
and the result follows from Theorem \ref{solvable} above. Let us assume now
that $G$ is a nilpotent Lie group. Since $T(G)^{+}$ is a compact subgroup of 
$Z_{G^{+}}$ the inclusion $T(G)^{+}\subset T(G^{+})$ always hold, and we
only have to show that $T(G^{+})\subset T(G)^{+}$. By Proposition 4.1 of 
\cite{Pat2}, the nilpotent Lie group $G/T(G)$ is simply connected and so the
compact subgroup $\pi (T(G^{+}))$ has to be trivial, where 
\begin{equation*}
\pi :G\rightarrow G/T(G)
\end{equation*}%
is the canonical projection. Consequently, $T(G^{+})\subset \ker (\pi )=T(G)$
showing that $T(G^{+})\subset T(G)^{+}$ and concluding the proof.
\end{proof}

\begin{corollary}
If $\varphi $ is an automorphism of a Lie group $G$ and $G^{0}$ is a compact
subgroup, then it follows that 
\begin{equation*}
h_{\mathrm{top}}(\varphi )=h_{\mathrm{top}}\left( \varphi
|_{T(G^{+})}\right) .
\end{equation*}%
In particular, if $G$ is a connected and compact Lie group $%
T(G^{+})=T(G)^{+} $.
\end{corollary}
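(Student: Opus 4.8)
The plan is to prove this corollary by reducing it to Theorem~\ref{solvable} via Proposition~\ref{subgroups}, and then to handle the compact case with a quotient argument analogous to the nilpotent case of the previous corollary. First I would observe that the hypothesis is exactly that $G^{0}$ is a compact subgroup. By part~5 of Proposition~\ref{subgroups}, whenever $G^{0}$ is compact the subgroup $G_{\varphi}$ admits the decomposition $(\ref{decomposition})$, which by Definition~2.2 means precisely that $\varphi$ decomposes $G$, i.e.\ $G$ is decomposable. Since $\varphi$ is assumed to be an automorphism of $G$, all the hypotheses of Theorem~\ref{solvable} are met, and we immediately conclude
\begin{equation*}
h_{\mathrm{top}}(\varphi)=h_{\mathrm{top}}\left(\varphi|_{T(G^{+})}\right).
\end{equation*}

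For the second assertion, I would suppose $G$ is connected and compact and argue that $T(G^{+})=T(G)^{+}$. The inclusion $T(G)^{+}\subset T(G^{+})$ always holds for the same reason as in the nilpotent corollary: $T(G)^{+}$ is a compact connected subgroup, it is $\varphi$-invariant and its Lie algebra lies in $\mathfrak{g}^{+}$, and it is central in $G^{+}$ since $T(G)\subset Z_{G}$ forces $T(G)^{+}\subset Z_{G^{+}}$; hence $T(G)^{+}$ sits inside the maximal such torus $T(G^{+})$. So the content is the reverse inclusion $T(G^{+})\subset T(G)^{+}$. The natural approach mirrors the nilpotent argument: pass to the quotient $\pi:G\rightarrow G/T(G)$ and show that the image $\pi(T(G^{+}))$ is trivial, which would give $T(G^{+})\subset\ker(\pi)=T(G)$ and therefore $T(G^{+})\subset T(G)^{+}$.

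The main obstacle will be justifying that $\pi(T(G^{+}))$ is trivial, since the clean mechanism used in the nilpotent case—that $G/T(G)$ is \emph{simply connected}, so it contains no nontrivial compact connected subgroup—does not transfer verbatim to the compact setting. For a connected compact $G$ the quotient $G/T(G)$ is again compact, not simply connected, so the argument must instead exploit the structure of compact connected groups: such a $G$ has the form where its toral component $T(G)$ is the identity component of the center $Z_{G}$, and $G/T(G)$ has finite (equivalently, discrete) center, so $G/T(G)$ carries no nontrivial connected central torus. Since $\pi(T(G^{+}))$ is a compact connected subgroup lying in the center of $\pi(G^{+})\subset G/T(G)$, and more strongly maps into the center of $G/T(G)$ itself, its connectedness forces it into the identity component of $Z_{G/T(G)}$, which is trivial. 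The care needed here is to verify that $\pi(T(G^{+}))$ really lands in the center of the quotient—this follows because $T(G^{+})\subset Z_{G^{+}}$ and the centrality is preserved under $\pi$—and to confirm that the identity component of the center of $G/T(G)$ is indeed trivial, which is where the definition of $T(G)$ as the maximal central torus of $G$ is used decisively. Once triviality of $\pi(T(G^{+}))$ is established the conclusion $T(G^{+})=T(G)^{+}$ follows immediately.
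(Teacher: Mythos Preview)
Your argument for the first assertion is correct and is exactly what the paper does: $G^{0}$ compact forces decomposability by Proposition~\ref{subgroups}(5), and then Theorem~\ref{solvable} applies.

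For the second assertion, however, there is a genuine gap. You claim that $\pi(T(G^{+}))$ lands in the center of $G/T(G)$, and you justify this by saying ``$T(G^{+})\subset Z_{G^{+}}$ and centrality is preserved under $\pi$.'' But $Z_{G^{+}}$ is the center of $G^{+}$, not of $G$; all this buys you is that $\pi(T(G^{+}))$ commutes with $\pi(G^{+})$, which is in general a proper subgroup of $G/T(G)$. A torus inside a compact semisimple group need not be central, so your mechanism for forcing $\pi(T(G^{+}))$ to be trivial does not go through as written.

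The paper avoids this difficulty entirely by invoking Theorem~\ref{compact}: when $G$ itself is compact it is a $\varphi$-invariant compact subgroup of $G_{\varphi}=G$, so Theorem~\ref{compact} gives $G^{+}\subset Z_{G}$ directly. From this the equality is immediate: $G^{+}$ is then a compact connected subgroup of $Z_{G}$, hence $G^{+}\subset T(G)$, which forces $T(G)^{+}=G^{+}$; and since $G^{+}$ is abelian (being central) and compact connected, $T(G^{+})=G^{+}$ as well. If you wish to repair your quotient route rather than cite Theorem~\ref{compact}, the clean fix is to observe that $G/T(G)$ is compact connected with finite center, hence semisimple, and then apply Proposition~\ref{subgroups}(5) to the induced automorphism on $G/T(G)$ to conclude $(G/T(G))^{+}=\{e\}$; since $\pi(G^{+})\subset (G/T(G))^{+}$ this gives $G^{+}\subset T(G)$, which is what you need.
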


\begin{proof}
The first assertion follows from the assumption $G$ is decomposable. In
addition, if $G$ is also compact, Proposition \ref{compact} implies $%
G^{+}\subset Z_{G}$ showing that $T(G^{+})=T(G)^{+}$ as stated.
\end{proof}

\begin{corollary}
If $G$ is a simply connected solvable Lie group and $\varphi$ an
automorphism, then 
\begin{equation*}
h_{\mathrm{top}}(\varphi)=0.
\end{equation*}
\end{corollary}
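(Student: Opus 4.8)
The plan is to reduce the computation of $h_{\mathrm{top}}(\varphi)$ to the toral component $T(G^{+})$ by means of the structural results already in hand, and then to show that this toral component is trivial because $G$ is simply connected and solvable. The whole argument is therefore a short reduction followed by one genuinely topological observation.

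First I would set up the reduction. By Proposition \ref{subgroups} every solvable Lie group is decomposable, so $\varphi$ decomposes $G$ and Theorem \ref{solvable} applies. This gives, exactly as in the first corollary following Theorem \ref{solvable},
\begin{equation*}
h_{\mathrm{top}}(\varphi)=h_{\mathrm{top}}\bigl(\varphi|_{T(G^{+})}\bigr).
\end{equation*}
Thus it suffices to analyze $\varphi$ on the compact connected group $T(G^{+})$, and in fact it suffices to understand how large $T(G^{+})$ can be.

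The key step is to show $T(G^{+})=\{e\}$. Since $T(G^{+})$ is, by definition, a compact connected subgroup of $G^{+}\subset G$, it is enough to prove that a connected simply connected solvable Lie group $G$ admits no nontrivial compact connected subgroup. Here I would invoke the maximal compact subgroup structure: any connected Lie group is homeomorphic to $K\times\mathbb{R}^{m}$, where $K$ is a maximal compact subgroup, so that $\pi_{1}(G)\cong\pi_{1}(K)$. Simple connectedness of $G$ then forces $K$ to be simply connected; but since $G$ is solvable, $K$ is a compact connected solvable group, hence a torus, and a simply connected torus is trivial. Therefore $K=\{e\}$, $G$ has no nontrivial compact connected subgroup, and in particular $T(G^{+})$ collapses to the identity. (Equivalently, one may quote that a simply connected solvable Lie group is diffeomorphic to $\mathbb{R}^{\dim G}$ and hence carries no nontrivial compact subgroup.)

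Finally I would conclude by observing that $h_{\mathrm{top}}(\varphi|_{\{e\}})=0$, so the displayed identity above yields $h_{\mathrm{top}}(\varphi)=0$, as claimed. I expect the only real point requiring care to be the second step, namely the justification that a simply connected solvable Lie group has no nontrivial compact subgroup; everything else is a direct application of Proposition \ref{subgroups} and Theorem \ref{solvable}. I would make sure to state the maximal-compact-subgroup homeomorphism and the classification of compact connected solvable groups as tori cleanly, since those are the load-bearing facts.
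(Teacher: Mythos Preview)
Your proposal is correct and follows essentially the same approach as the paper: reduce to $T(G^{+})$ via Proposition~\ref{subgroups} and Theorem~\ref{solvable}, then show $T(G^{+})=\{e\}$ using simple connectedness. The only cosmetic difference is in the topological step: the paper quotes the fact that every connected subgroup of a simply connected solvable Lie group is itself simply connected (hence $G^{+}$ is simply connected and $T(G^{+})$ is trivial), whereas you argue directly at the level of $G$ via the maximal compact subgroup decomposition; both routes amount to the same observation that a simply connected solvable Lie group is diffeomorphic to Euclidean space and so contains no nontrivial compact subgroup.
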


\begin{proof}
In fact, it is well known that any connected subgroup of a simply connected
solvable Lie group is simply connected, \cite{SM2},\cite{Hel}. Therefore, $%
G^{+}$ is simply connected and consequently $T(G^{+})=\{e\}$.
\end{proof}

\subsection{Lie groups with finite semisimple center}

In \cite{ADS} the authors introduce the notion of finite semisimple center
Lie group $G$ to study the controllability property of a linear control
systems on $G,$ \cite{AyTi}. In this section we compute the entropy of any
automorphisms $\varphi $\ for this special class of Lie groups. If $G$ admit
a $\varphi $-invariant Levi component, their topological entropy depends
just on the unstable subgroup of the radical of $G$.

\begin{definition}
Let $G$ be a connected Lie group. We say that $G$ has \textbf{finite
semisimple center} if any semisimple subgroup of $G$ has finite center.
\end{definition}

We should notice that there are many groups with such property. For
instance, solvable Lie groups and semisimple Lie groups with finite center
and also their semi-direct products. Moreover, any reductive Lie group in
the Harish-Chandra class has also finite semisimple center.

For a given Lie algebra $\mathfrak{g}$, a \textbf{Levi subalgebra }of $%
\mathfrak{g}$ is a maximal semisimple Lie subalgebra $\mathfrak{s}$ of $%
\mathfrak{g}$, in the sense, that for any semisimple Lie subalgebra $%
\mathfrak{l}$ of $\mathfrak{g}$, there exists and inner automorphism $\psi $
such that $\psi (\mathfrak{l})\subset \mathfrak{s}$. By Theorem 4.1 of \cite%
{ALEB}, any Lie algebra admits a Levi subalgebra. If $G$ is a connected Lie
group with Lie algebra $\mathfrak{g}$ a \textbf{Levi subgroup} of $G$ is a
connected semisimple Lie group $S\subset G$ such that its Lie algebra $%
\mathfrak{s}\subset \mathfrak{g}$ is a Levi subalgebra.

Let us denote by $\mathfrak{r}$ for the radical of the Lie algebra $%
\mathfrak{g}$ and by $R$ the radical of $G$, that is, $R$ is the closed
connected Lie subgroup of $G$ with Lie algebra $\mathfrak{r}$. From Theorem
4.1 of \cite{ALEB} and its corollaries, we have that $G$ decomposes as $G=RS$
with $\dim(S\cap R)=0$. For groups with finite semisimple center we obtain.

\begin{proposition}
\label{invLevi} If $G$ is a connected Lie group with finite semisimple
center then $R\cap S$ is a discrete finite subgroup of $G$. Thus, any Levi
subgroup $S$ of $G$ is closed in $G$.
\end{proposition}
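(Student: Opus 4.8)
The plan is to prove that $R \cap S$ is a discrete subgroup, and then invoke the finite semisimple center hypothesis to upgrade this to finiteness; closedness of $S$ then follows from Lemma~\ref{closed}.

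First I would establish discreteness of $R \cap S$. This is immediate from the Lie-algebraic decomposition: since $G = RS$ with $\dim(S \cap R) = 0$, the Lie algebras satisfy $\mathfrak{r} \cap \mathfrak{s} = \{0\}$, and in fact $\mathfrak{g} = \mathfrak{r} \oplus \mathfrak{s}$ as vector spaces (the Levi decomposition). The intersection $R \cap S$ is a Lie subgroup whose Lie algebra is contained in $\mathfrak{r} \cap \mathfrak{s} = \{0\}$, hence its Lie algebra is trivial and $R \cap S$ is $0$-dimensional, i.e.\ discrete.

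Next I would argue that $R \cap S$ is finite. The key observation is that $R \cap S$ is a normal subgroup of $S$: indeed $R$ is normal in $G$ (being the radical), so $R \cap S$ is normal in $S$. A discrete normal subgroup of a connected group is central, so $R \cap S \subset Z_S$, the center of $S$. Since $S$ is a connected semisimple subgroup of $G$ and $G$ has finite semisimple center, $Z_S$ is finite; therefore $R \cap S$, being a subgroup of $Z_S$, is finite. This is the step where the finite-semisimple-center hypothesis is genuinely used, and it is the conceptual heart of the argument—without it, $R \cap S$ could be an infinite discrete central subgroup.

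Finally, to conclude that $S$ is closed, I would apply Lemma~\ref{closed}. Using the Levi decomposition $\mathfrak{g} = \mathfrak{r} \oplus \mathfrak{s}$, the hypotheses of that lemma are satisfied with $H = R$ and $K = S$. Since $R$ is already closed (it is the radical) and $R \cap S$ has just been shown to be a discrete subgroup, the lemma gives the equivalence that forces $S$ to be closed as well. The main obstacle I anticipate is not in the closedness step, which is a direct citation, but in cleanly justifying the passage from discreteness to centrality of $R \cap S$ in $S$; one must be careful that normality of $R$ in $G$ does restrict to normality of $R \cap S$ in $S$, and that the standard fact ``discrete normal implies central in a connected group'' applies to $S$, which holds because $S$ is connected.
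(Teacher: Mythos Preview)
Your proposal is correct and follows essentially the same line as the paper: establish discreteness of $R\cap S$ from $\dim(R\cap S)=0$, use normality of $R$ to get $R\cap S$ normal in $S$, invoke ``discrete normal in connected $\Rightarrow$ central'' together with the finite semisimple center hypothesis to obtain finiteness, and then apply Lemma~\ref{closed} for closedness of $S$. The only cosmetic difference is that the paper remarks at the end that all Levi subgroups are conjugate, whereas your argument already applies verbatim to an arbitrary Levi subgroup $S$.
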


\begin{proof}
Since $S$ is connected, any discrete normal subgroup of $S$ is contained in
its center. However, $R$ is normal so $R\cap S$ is also a normal subgroup of 
$S.$ The result follows because $Z_{S}$ is finite by assumption. Moreover,
by Lemma \ref{closed}, $R\cap S$ is discrete if and only if $R$ and $S$ are
closed subgroups of $G$. Since any Levi subgroup is conjugated to $S$ (see
Corollary 3, Chapter I of \cite{ALEB}) the prove is finish.
\end{proof}

Now, we are in a position to establish and prove our main result about the
topological entropy of automorphisms.

\begin{theorem}
\label{main} Let $G$ be a Lie group and $\varphi $ a $G$-automorphism. If $G$
has finite semisimple center and admits a $\varphi $ invariant Levi
subgroup, then 
\begin{equation*}
h_{\mathrm{top}}(\varphi )=h_{\mathrm{top}}\left( \varphi |_{T\left(
R^{+}\right) }\right) ,
\end{equation*}%
where $R^{+}$ is the corresponding unstable component of the radical $R$ of $%
G$.
\end{theorem}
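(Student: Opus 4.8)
The plan is to sandwich $h_{\mathrm{top}}(\varphi)$ between two copies of $h_{\mathrm{top}}(\varphi|_{R})$ and then reduce the radical by the solvable case already established. First I note that the radical $R$ is characteristic, so $\varphi(R)=R$; being closed, connected and solvable, $R$ is a closed $\varphi$-invariant subgroup on which $\varphi|_{R}$ is an automorphism. Proposition \ref{lower} then gives the lower bound $h_{\mathrm{top}}(\varphi)\geq h_{\mathrm{top}}(\varphi|_{R})$. The corollary to Theorem \ref{solvable} for solvable groups, applied to $\varphi|_{R}$, yields $h_{\mathrm{top}}(\varphi|_{R})=h_{\mathrm{top}}(\varphi|_{T(R^{+})})$, where $R^{+}$ is the unstable subgroup of $R$ under $\varphi|_{R}$. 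Hence everything reduces to proving the reverse inequality $h_{\mathrm{top}}(\varphi)\leq h_{\mathrm{top}}(\varphi|_{R})$.

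For the upper bound I use the $\varphi$-invariant Levi subgroup $S$. Since $\phi$ is an automorphism of $\mathfrak{g}$ preserving the Levi subalgebra $\mathfrak{s}$, it restricts to an automorphism of $\mathfrak{s}$, so $\varphi(S)=S$ and $\varphi|_{S}$ is an automorphism of $S$. Now $S$ is a connected semisimple group whose center is finite by the finite semisimple center hypothesis; thus $S$ is a reductive group in the Harish--Chandra class whose toral component $T(S)$ is trivial, and the result of \cite{Pat2} quoted in the introduction gives $h_{\mathrm{top}}(\varphi|_{S})=h_{\mathrm{top}}(\varphi|_{T(S)})=0$. Consider the multiplication map
\begin{equation*}
m:R\times S\rightarrow G,\qquad m(r,s)=rs,
\end{equation*}
which is surjective because $G=RS$. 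It intertwines $\varphi|_{R}\times\varphi|_{S}$ with $\varphi$, since $m(\varphi r,\varphi s)=\varphi(r)\varphi(s)=\varphi(rs)=\varphi(m(r,s))$. Granting that $m$ is proper, Propositions \ref{product} and \ref{projection} give
\begin{equation*}
h_{\mathrm{top}}(\varphi)\leq h_{\mathrm{top}}(\varphi|_{R}\times\varphi|_{S})=h_{\mathrm{top}}(\varphi|_{R})+h_{\mathrm{top}}(\varphi|_{S})=h_{\mathrm{top}}(\varphi|_{R}),
\end{equation*}
which is precisely the reverse inequality.

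The main obstacle is the properness of $m$, and this is where the hypotheses enter decisively. By Proposition \ref{invLevi} the subgroup $R\cap S$ is finite, so the restriction to $S$ of the canonical projection $\pi:G\rightarrow G/R$ is a surjective homomorphism $\pi|_{S}:S\rightarrow G/R$ with finite kernel $S\cap R$ between Lie groups of equal dimension, hence a finite covering map and in particular proper. From this I would deduce properness of $m$ directly: if $r_{n}s_{n}\to g$, then $\pi(s_{n})=\pi(r_{n}s_{n})\to\pi(g)$, so by properness of $\pi|_{S}$ the sequence $s_{n}$ has a subsequence converging to some $s\in S$; along that subsequence $r_{n}=(r_{n}s_{n})s_{n}^{-1}\to gs^{-1}$, which lies in $R$ because $R$ is closed. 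Thus $m^{-1}(K)$ is (sequentially) compact for every compact $K\subset G$, so $m$ is proper.

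Combining the two inequalities gives $h_{\mathrm{top}}(\varphi)=h_{\mathrm{top}}(\varphi|_{R})$, and the solvable corollary then yields $h_{\mathrm{top}}(\varphi)=h_{\mathrm{top}}(\varphi|_{T(R^{+})})$, as claimed.
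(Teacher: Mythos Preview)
Your argument is correct and follows the same overall architecture as the paper: show that the multiplication map $m:R\times S\to G$ is a proper surjection intertwining $\varphi|_R\times\varphi|_S$ with $\varphi$, use $h_{\mathrm{top}}(\varphi|_S)=0$ to get $h_{\mathrm{top}}(\varphi)\le h_{\mathrm{top}}(\varphi|_R)$, and then invoke the solvable case on $R$.

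The one genuine difference is in how properness of $m$ is established. The paper argues directly that $m$ is a closed map: it uses the local diffeomorphism $(X,Y)\mapsto \mathrm{e}^X\mathrm{e}^Y$ from $\mathfrak{r}\times\mathfrak{s}$ to a neighborhood of $e$ in $G$ to show that if $r_ns_n\to rs$ then, up to an element of the finite set $R\cap S$, the factors $r_n$ and $s_n$ converge separately; closedness plus finite fibers then gives properness. You instead pass to the quotient $\pi:G\to G/R$ and observe that $\pi|_S$ is a finite covering (surjective homomorphism with finite kernel $R\cap S$ and matching dimensions), hence proper; a convergent sequence $r_ns_n$ then forces a convergent subsequence of $s_n$ via $\pi$, and the $r_n$ follow. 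Your route is shorter and avoids the local chart computation, at the cost of invoking the quotient structure. For the vanishing of $h_{\mathrm{top}}(\varphi|_S)$ the paper cites directly the semisimple result (Theorem~5.3 of \cite{Pat2}), whereas you go through the Harish--Chandra reductive case with $T(S)=\{e\}$; both are valid, the paper's citation being more direct.
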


\begin{proof}
Let $S$ be the $\varphi $-invariant Levi subgroup of $G$. Since $\varphi $
is an automorphism we know that $R$ is also $\varphi $-invariant. Since $G$
has the semisimple finite center property, Proposition \ref{invLevi} implies
that $S$ is a closed subgroup and therefore $R\cap S$ is a discrete finite
subgroup of $G$.

Let us consider the product map $f:R\times S\rightarrow G$ defined by $%
(r,s)\mapsto rs$. By Malcev's Theorem (see \cite{ALEB}, Theorem 4.1) we get
that $f$ is surjective. On the other hand, since $\varphi $ is an
automorphism it follows that $f$ commutates with $\varphi $ and $\varphi
|_{R}\times \varphi |_{S}$. Moreover, it holds:

\begin{itemize}
\item[1.] $f$ is a continuous closed map.

In fact, if $r_{n}\rightarrow r$ in $R$ and $s_{n}\rightarrow s$ in $S$ then 
$r_{n}s_{n}\rightarrow rs$ in $G$, showing that $f$ is continuous. Let us
consider two closed subsets $A\subset R$ and $B\subset S$ and assume that $%
x_{n}=r_{n}s_{n}\rightarrow rs$ with $r_{n}\in A$, $s_{n}\in B$, $r\in R$
and $s\in S$. By Lemma 6.14 of \cite{SM2} there exist neighborhoods $0\in
V\subset \mathfrak{r}$, $0\in U\subset \mathfrak{s}$ and $e\in W\subset G$
such that $(X,Y)\mapsto \mathrm{e}^{X}\mathrm{e}^{Y}\in W$ is a
diffeomorphism. Since $r_{n}s_{n}\rightarrow rs$, there exists $n_{0}\in 
\mathbb{N}$ such that $r^{-1}r_{n}s_{n}s^{-1}\in W$ for $n\geq n_{0}$ and
so, there are sequences $(v_{n})\subset \mathrm{e}^{V}$, $(u_{n})\subset 
\mathrm{e}^{U}$ such that $v_{n},u_{n}\rightarrow e$ and $%
r^{-1}r_{n}s_{n}s^{-1}=v_{n}u_{n}$ for $n\geq n_{0}$. Therefore, 
\begin{equation*}
v_{n}^{-1}r^{-1}r_{n}=u_{n}ss_{n}^{-1}\in R\cap S.
\end{equation*}
However, $R\cap S$ is a finite discrete subgroup. Thus, without lost of
generality we can assume that $v_{n}^{-1}r^{-1}r_{n}=u_{n}ss_{n}^{-1}=l$ for
some $l\in R\cap S$. Consequently, $r_{n}=rv_{n}l\rightarrow rl$ and $%
s_{n}=l^{-1}u_{n}s\rightarrow l^{-1}s$. Since $A$ and $B$ are closed
subsets, we must have $rl\in A$ and $l^{-1}s\in B$ implying that $%
rs=rll^{-1}s\in AB$ and showing that $f$ is in fact a closed map.

\item[2.] $f$ is a proper map.

Since $f$ is continuous and closed we just need to show that for any $x\in G$
the set $f^{-1}(x)$ is compact. However, using that $x=rs$ for some $r\in R$
and $s\in S$, it is straightforward to check that 
\begin{equation*}
f^{-1}(x)=\{(rl,l^{-1}s),\;l\in R\cap S\},
\end{equation*}%
which shows that $f^{-1}(x)$ is in fact a finite subset of $R\times S$.
\end{itemize}

By Propositions \ref{lower}, \ref{product}, \ref{projection} we get 
\begin{equation*}
h_{\mathrm{top}}\left( \varphi |_{R}\right) +h_{\mathrm{top}}\left( \varphi
|_{S}\right) =h_{\mathrm{top}}\left( \varphi |_{R}\times \varphi
|_{S}\right) \geq h_{\mathrm{top}}(\varphi |_{G})\geq h_{\mathrm{top}%
}(\varphi |_{R}).
\end{equation*}%
By Theorem 5.3 of \cite{Pat2} the entropy of any surjective endomorphism of
a semisimple Lie group is zero. Therefore, $h_{\mathrm{top}}\left( \varphi
|_{S}\right) =0$ which combined with Theorem 3.8 give us 
\begin{equation*}
h_{\mathrm{top}}(\varphi )=h_{\mathrm{top}}\left( \varphi |_{R}\right) =h_{%
\mathrm{top}}\left( \varphi |_{T\left( R^{+}\right) }\right)
\end{equation*}%
as we wish to prove.
\end{proof}

In particular we obtain

\begin{corollary}
If $G$ is a reductive Lie group in the Harish-Chandra class, then the
entropy of any automorphism $\varphi$ of $G$ satisfies 
\begin{equation*}
h_{\mathrm{top}}(\varphi)=h_{\mathrm{top}}\left(\varphi|_{T\left(G\right)^+}%
\right).
\end{equation*}
\end{corollary}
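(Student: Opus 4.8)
The plan is to show that a reductive group $G$ in the Harish--Chandra class meets the two hypotheses of Theorem~\ref{main}, and then to recognise the subgroup $T(R^{+})$ produced there as $T(G)^{+}$. First I would unwind the structure of $\fg$. Reductivity means $\fg=\fz\oplus[\fg,\fg]$ with $[\fg,\fg]$ semisimple and $\fz$ the centre of $\fg$; hence the radical is $\fr=\fz$, an abelian ideal. Since $\fz$ is the centre of the Lie algebra of the \emph{connected} group $G$, the analytic subgroup $R$ with Lie algebra $\fr=\fz$ is exactly the identity component $(Z_{G})^{0}$ of the centre of $G$. In particular $R$ is abelian and central. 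Moreover, as recorded in the discussion preceding this corollary, any reductive group in the Harish--Chandra class has finite semisimple centre, so the first hypothesis of Theorem~\ref{main} holds.

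For the second hypothesis I would exhibit a $\varphi$-invariant Levi subgroup. In the reductive case the Levi subalgebra is the derived subalgebra $\fs=[\fg,\fg]$, which is a characteristic ideal: since $\phi=(d\varphi)_{e}$ is a Lie-algebra automorphism, $\phi([\fg,\fg])=[\phi\fg,\phi\fg]=[\fg,\fg]$, so $\fs$ is $\phi$-invariant. Consequently the Levi subgroup $S$, the connected subgroup with Lie algebra $\fs$, satisfies $\varphi(S)=S$ and is a $\varphi$-invariant Levi subgroup. Theorem~\ref{main} then applies and yields
\[
h_{\mathrm{top}}(\varphi)=h_{\mathrm{top}}\!\left(\varphi|_{T(R^{+})}\right).
\]

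It remains to identify $T(R^{+})$ with $T(G)^{+}$. Here I would use that $R$ is abelian, hence nilpotent, and that $\varphi|_{R}$ is an automorphism because $\varphi$ is. Applying the Corollary to Theorem~\ref{solvable} for the nilpotent case to $\varphi|_{R}$ gives $T(R^{+})=T(R)^{+}$. Finally, since $R=(Z_{G})^{0}$ is central, its maximal compact connected subgroup $T(R)$ is a compact connected central subgroup of $G$, whence $T(R)\subseteq T(G)$; conversely $T(G)$ is connected and central, hence contained in $(Z_{G})^{0}=R$ and therefore in $T(R)$. Thus $T(R)=T(G)$ and $T(R)^{+}=T(G)^{+}$, which combined with the displayed equality proves the corollary.

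The structure theory is routine; the step needing the most care is the identification $T(R^{+})=T(G)^{+}$, and within it the equality $T(R^{+})=T(R)^{+}$. Concretely, this relies on the closedness of the dynamic subgroups of $\varphi|_{R}$ (Proposition~\ref{automorphism}) together with the fact that, writing $R=\fr/\Lambda$ for the kernel $\Lambda$ of the exponential, the unstable subspace $\fr^{+}$ is rational with respect to the integral lattice $\Lambda$. This rationality is exactly what forces the toral component of $R^{+}$ to coincide with the unstable part of the torus $T(R)$, and it is the only point where the discreteness of $\Lambda$ (equivalently, the integrality of $\phi$ on $T(R)$) is genuinely used.
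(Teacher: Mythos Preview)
Your proof is correct and follows the same strategy as the paper: verify the hypotheses of Theorem~\ref{main} (finite semisimple center, and the derived subgroup $G'=S$ as a $\varphi$-invariant Levi component), apply the theorem, and then identify $T(R^{+})$ with $T(G)^{+}$. The only difference is in this last identification: you invoke the nilpotent case of the corollary to Theorem~\ref{solvable} on the abelian group $R$ to obtain $T(R^{+})=T(R)^{+}$ and then argue $T(R)=T(G)$ from $R=(Z_G)_0$, whereas the paper reaches $T(Z_G^{+})=T(G)^{+}$ by a short direct containment argument; both are valid and your route is arguably cleaner.
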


\begin{proof}
In fact, if $G$ is a reductible Lie group in the Harish-Chandra class, we
know that $G=Z_{G}G^{\prime }$ where $G^{\prime }$ is the derived subgroup
of $G$. Moreover, $G^{\prime }$ is a maximal connected semisimple Lie
subgroup of $G$ and has finite center. Of course, since $G^{\prime }$ is
certainly $\varphi $-invariant we get from our Theorem 3.14 that 
\begin{equation*}
h_{\mathrm{top}}(\varphi )=h_{\mathrm{top}}\left( \varphi |_{T\left(
Z_{G}^{+}\right) }\right) .
\end{equation*}%
However, $T(Z_{G}^{+})$ is a compact connected subgroup in $Z_{G}\cap
G^{+}\subset Z_{G^{+}}$ showing that $T(Z_{G}^{+})\subset T(G)^{+}$. Now,
since $T(Z_{G}^{+})\subset Z_{G}$ we must have that $T(Z_{G}^{+})\subset
T(G) $ and consequently $T(Z_{G}^{+})=T(G)^{+}$ concluding the proof.
\end{proof}

For a simply connected solvable Lie group $G$ we prove in Corollary 3.11
that $h_{\mathrm{top}}(\varphi )=0,$ for any $\varphi $ automorphism$.$It is
possible to extend this result as follows.

\begin{corollary}
Let $G$ be a simply connected Lie group with finite center and $\varphi$ an
automorphism of $G$. If $G$ admits a $\varphi$-invariant Levi subgroup then 
\begin{equation*}
h_{\mathrm{top}}(\varphi)=0.
\end{equation*}
\end{corollary}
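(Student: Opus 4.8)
The plan is to reduce the statement to Theorem \ref{main} and then to show that the target subgroup $T(R^{+})$ is trivial. First I would verify that the hypotheses of Theorem \ref{main} hold. The existence of a $\varphi$-invariant Levi subgroup is assumed, so the only point to check is that a simply connected Lie group with finite center has finite semisimple center. I would argue this from the structure of the Levi decomposition in the simply connected case: one has $G=R\rtimes S$ with both the radical $R$ and a Levi subgroup $S$ simply connected and $R\cap S=\{e\}$. The key observation is that $S$ acts on $\mathfrak{r}$ through the finite-dimensional representation $\Ad|_{\mathfrak{r}}$, and on a simply connected semisimple group every finite-dimensional representation annihilates the infinite part of the center; such central elements therefore centralize $R$ (since $\exp:\mathfrak{r}\to R$ is onto, as $R$ is simply connected solvable) as well as $S$, and so lie in $Z_{G}$. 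Finiteness of $Z_{G}$ then forces $Z_{S}$ to be finite, hence $S$ is linear, and every semisimple subgroup of $G$, being conjugate into $S$, is linear with finite center. This yields the finite semisimple center property.

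Granting this, Theorem \ref{main} gives $h_{\mathrm{top}}(\varphi)=h_{\mathrm{top}}\!\left(\varphi|_{T(R^{+})}\right)$, and it remains to prove $T(R^{+})=\{e\}$. Here I would use that the radical $R$ of a simply connected Lie group is itself simply connected (a standard fact, obtained from the homotopy exact sequence of the fibration $R\hookrightarrow G\to G/R$ together with $G/R$ being a simply connected semisimple group). Then $R$ is simply connected solvable, and $R^{+}$ is a connected subgroup of it, hence simply connected by the very fact invoked in Corollary 3.11, namely that a connected subgroup of a simply connected solvable Lie group is simply connected. A simply connected solvable Lie group is diffeomorphic to a Euclidean space and so contains no nontrivial compact connected subgroup; therefore its toral component $T(R^{+})$ is trivial, and $h_{\mathrm{top}}\!\left(\varphi|_{T(R^{+})}\right)=h_{\mathrm{top}}\!\left(\varphi|_{\{e\}}\right)=0$.

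As an alternative route that bypasses Theorem \ref{main}, one can work directly with the product map $f:R\times S\to G$, $(r,s)\mapsto rs$. Since $R\cap S=\{e\}$ and both $R$ and $S$ are closed (Lemma \ref{closed}, using $\mathfrak{r}\oplus\mathfrak{s}=\mathfrak{g}$ and discreteness of $R\cap S$), $f$ is a diffeomorphism conjugating $\varphi$ to $\varphi|_{R}\times\varphi|_{S}$. Proposition \ref{product} then splits the entropy as $h_{\mathrm{top}}(\varphi)=h_{\mathrm{top}}(\varphi|_{R})+h_{\mathrm{top}}(\varphi|_{S})$, where the first summand vanishes by Corollary 3.11 and the second by Theorem 5.3 of \cite{Pat2} (applicable because $S$ has finite center).

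I expect the genuine obstacle to be the passage from \emph{finite center} to \emph{finite semisimple center} in the first paragraph. The delicate point is controlling $Z_{S}$, since a simply connected semisimple group may have infinite center (for instance $\widetilde{\Sl(2,\R)}$, which moreover fails to be linear). The representation-theoretic step sketched above—showing that noncompact simple factors with infinite center would force that infinite part into $Z_{G}$—is precisely where the hypothesis on $G$ is used, and is the part that requires the most care to state rigorously.
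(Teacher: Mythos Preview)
Your proposal is correct and matches what the paper intends: the corollary is stated there without proof, as a direct consequence of Theorem \ref{main} combined with the argument of Corollary 3.11 (simply connectedness of $G$ makes $R$ simply connected solvable, hence $T(R^{+})=\{e\}$). You additionally supply the one verification the paper leaves implicit---that ``finite center'' on a simply connected $G$ yields the ``finite semisimple center'' hypothesis required by Theorem \ref{main}---and your representation-theoretic argument for this step is sound.
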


In the sequel, we show that the condition on the existence of invariant Levi
components is far from be trivial.

\begin{definition}
We say that an automorphism $\varphi $ of $G$ is \textbf{semisimple} if its
differential at $e\in G$ is semisimple. That is, $(d\varphi )_{e}$ is
diagonalizable as a map of the complexification of $\mathfrak{g}$ .
\end{definition}

For this particular class of automorphisms we get.

\begin{corollary}
Let $G$ be a Lie group and $\varphi $ a semisimple automorphim of $G$. If $G$
has the finite semisimple center property, then 
\begin{equation*}
h_{\mathrm{top}}(\varphi )=h_{\mathrm{top}}\left( \varphi |_{T\left(
R^{+}\right) }\right) .
\end{equation*}
\end{corollary}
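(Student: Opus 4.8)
The plan is to reduce this corollary to the main result, Theorem~\ref{main}. Since $G$ is already assumed to have the finite semisimple center property, the only hypothesis of Theorem~\ref{main} still to be verified is the existence of a $\varphi$-invariant Levi subgroup. Thus everything comes down to the following claim: a semisimple automorphism $\varphi$ of $G$ admits a $\varphi$-invariant Levi subgroup $S$. Once this is established, Theorem~\ref{main} applies verbatim and delivers $h_{\mathrm{top}}(\varphi)=h_{\mathrm{top}}(\varphi|_{T(R^{+})})$.

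First I would pass to the Lie algebra. Write $\phi=(d\varphi)_{e}$, which by the definition of a semisimple automorphism is diagonalizable on $\fg_{\C}$. The radical $\fr$ and the nilradical $\fn$ are characteristic ideals, hence $\phi$-invariant. Fixing any Levi subalgebra $\fs_{0}$, the subspace $\phi(\fs_{0})$ is again a Levi subalgebra, and by the Malcev--Harish-Chandra conjugacy theorem the two differ by an inner automorphism $\exp(\ad z)$ with $z$ in the nilradical $\fn$. The objective of this step is to correct $\fs_{0}$ to a Levi subalgebra $\fs$ that is genuinely fixed by $\phi$.

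The decisive point is the semisimplicity of $\phi$: the abstract group $\langle\phi\rangle$ (equivalently, its Zariski closure in $\operatorname{Aut}(\fg)$, a diagonalizable and therefore reductive algebraic group) acts on $\fg$ completely reducibly, meaning every $\phi$-invariant subspace admits a $\phi$-invariant complement. Under exactly this hypothesis, Taft's theorem on invariant Levi decompositions — equivalently Mostow's theorem on fully reducible groups of automorphisms — yields a Levi subalgebra $\fs$ invariant under the entire group of automorphisms, in particular under $\phi$. If a self-contained argument is preferred, the same conclusion follows cohomologically: the obstruction to adjusting $\fs_{0}$ into a $\phi$-invariant complement of $\fr$ lies in $H^{1}(\fs_{0},\fr)$, which vanishes by Whitehead's Lemma because $\fs_{0}$ is semisimple and $\fr$ is a finite-dimensional $\fs_{0}$-module; the complete reducibility of $\phi$ then permits the correction to be chosen $\phi$-equivariantly, by splitting off the eigenvalue-$1$ part of the operator induced by $\phi$ on $\mathrm{Hom}(\fs_{0},\fr)$. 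Either route produces a $\phi$-invariant Levi subalgebra $\fs$.

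Finally I would lift from the algebra to the group. Let $S$ be the connected Lie subgroup of $G$ with Lie algebra $\fs$; by definition this is a Levi subgroup. Since $\varphi$ is a homomorphism with $d\varphi=\phi$ and $\phi(\fs)=\fs$, we have $\varphi(\exp X)=\exp(\phi X)\in S$ for every $X\in\fs$, and because $S$ is generated by $\exp(\fs)$ it follows that $\varphi(S)\subseteq S$; thus $S$ is a $\varphi$-invariant Levi subgroup. Invoking Theorem~\ref{main} with this $S$ gives the claimed identity. I expect the main obstacle to be the middle step — the existence of the $\phi$-invariant Levi subalgebra — since that is precisely where the bare semisimplicity hypothesis must be converted into the complete-reducibility input feeding Taft--Mostow (or into the equivariant vanishing of cohomology); by contrast the reduction to Theorem~\ref{main} and the group-level lift are routine.
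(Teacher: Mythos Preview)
Your proposal is correct and follows essentially the same route as the paper: the paper invokes Mostow's result (Corollary~5.2 of \cite{Mos}) that any group of semisimple automorphisms of a Lie algebra leaves invariant a Levi subalgebra, applies it to the cyclic group $\{\phi^{n}:n\in\Z\}$ to obtain a $\phi$-invariant Levi subalgebra $\fs$, lifts to the connected Levi subgroup $S$, and then applies Theorem~\ref{main}. Your additional cohomological sketch and the explicit group-level lift add detail, but the core argument is the same Mostow/Taft step you identified.
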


\begin{proof}
By Corollary 5.2 of \cite{Mos}, any group of semisimple automorphisms of a
Lie algebra leaves invariant a Levi subalgebra. Therefore, there exists a
Levi subalgebra $\mathfrak{s}\subset \mathfrak{g}$ that is invariant by the
group of semisimple automorphisms $\{ \phi ^{n};\; \;n\in \mathbb{Z}\}$.
Hence, the Levi subgroup $S\subset G$ with Lie algebra $\mathfrak{s}$ is $%
\varphi $-invariant and by Theorem \ref{main} we obtain 
\begin{equation*}
h_{\mathrm{top}}(\varphi )=h_{\mathrm{top}}\left( \varphi |_{T\left(
R^{+}\right) }\right) .
\end{equation*}
\end{proof}

\end{document}